\documentclass[12pt]{article}
\usepackage[T1]{fontenc}
\usepackage{amsmath,amssymb,amsthm}
\usepackage{tikz}
\usepackage[letterpaper,margin=1in]{geometry}
\usepackage{booktabs,microtype,xurl}
\usepackage[hidelinks]{hyperref}
\hypersetup{pdftitle={Rational Dyck Excursion Counts as Sums of Generalized Fuss-Catalan Products},pdfauthor={Brecht Verbeken, Arne Vanhoyweghen, Vincent Ginis}}
\allowdisplaybreaks[2]
\newtheorem{theorem}{Theorem}
\newtheorem{proposition}[theorem]{Proposition}
\newtheorem{corollary}[theorem]{Corollary}
\newtheorem{lemma}[theorem]{Lemma}
\newcommand{\Q}{\mathbb{Q}}
\newcommand{\Z}{\mathbb{Z}}

\newcommand{\seqnum}[1]{\href{https://oeis.org/#1}{\textup{\underline{#1}}}}

\title{Rational Dyck Excursion Counts as Sums of Generalized Fuss--Catalan Products}
\author{%
  Brecht Verbeken\textsuperscript{1,2},
  Arne Vanhoyweghen\textsuperscript{1,2}, and
  Vincent Ginis\textsuperscript{1,2,3}\\[0.8em]
  \parbox{0.96\textwidth}{\centering\small
    \textsuperscript{1}Department of Business Technology and Operations,\\
    Data Analytics Laboratory, Vrije Universiteit Brussel (VUB),\\
    Pleinlaan 2, 1050 Brussels, Belgium.\\[0.45em]
    \textsuperscript{2}imec-SMIT, Vrije Universiteit Brussel,\\
    Pleinlaan 9, 1050 Brussels, Belgium.\\[0.45em]
    \textsuperscript{3}School of Engineering and Applied Sciences,\\
    Harvard University, Cambridge, Massachusetts 02138, USA.
  }%
}
\date{}
\begin{document}
\maketitle
\begin{abstract}
For walks with jumps $+p$ and $-q$, where $p,q$ are coprime positive integers, we express the weak and strict rational Dyck excursion counts as finite weighted sums of generalized Fuss--Catalan products. Both sums run over the same partitions of $q$ for every path size. Reversing the order of the steps and changing their signs transforms a returning walk with jumps $+p$ and $-q$ into one with jumps $+q$ and $-p$, with the same heights in reverse order. This preserves both weak and strict excursions. Applying the formulas with $p$ and $q$ exchanged therefore gives sums over partitions of $p$, so we can choose the smaller of the two jump magnitudes. The indices of the Fuss--Catalan factors satisfy certain divisibility conditions that are determined by the jump magnitudes and the partition parts. The weak formula alternates in sign with the number of parts, whereas every term in the strict formula is positive. We prove both formulas by cutting unrestricted walks at their first and last visits to their lowest height. Expanding the resulting factors produces the partitions of $q$ and the different signs. We also give equivalent binomial convolutions. Multiplying each Fuss--Catalan factor by $q$ in the weak formula or $p$ in the strict formula gives a positive integer. For $q=1$, the weak and strict counts reduce to Fuss--Catalan and shifted Raney numbers, respectively; for $q=2$ and $q=3$, the formulas yield explicit convolution identities.
\end{abstract}

\section{Introduction}\label{secintroduction}
A nonempty walk can return to zero only if its increments have opposite signs. We write these increments as $+p$ and $-q$, where $p$ and $q$ are coprime positive integers denoting the jump magnitudes. A weak excursion starts and ends at zero and stays nonnegative; a strict excursion is positive between its endpoints. A weak meander starts at zero and stays nonnegative, but need not return to zero. A positive meander starts at zero and stays strictly positive at every later step. In the special case where the downward jump has magnitude one ($q=1$), the weak and strict excursion counts are given by Fuss--Catalan and shifted Raney numbers, respectively. We seek expressions for these counts in terms of the same coefficient family when $q>1$. We give weighted sums of generalized Fuss--Catalan products. These formulas sum over partitions of the jump magnitude $q$, rather than partitions of the path size. For each partition, the indices in the products must satisfy certain divisibility conditions.

For original jump magnitudes $p_0,q_0$, put $g=\gcd(p_0,q_0)$ and $p=p_0/g$, $q=q_0/g$. We call this \emph{normalization}; dividing all heights by $g$ preserves the weak and strict excursion counts. After normalization, a returning walk of size $n$ has $qn$ upward steps and $pn$ downward steps, hence $(p+q)n$ steps altogether.

Figure~\ref{figwalkexamples} shows three walks with jumps $+4$ and $-3$ that return to zero after fourteen steps. The first is strict, the second is weak with an interior return to zero, and the third goes below zero.

\begin{figure}[t]
\centering
\definecolor{walkstrict}{HTML}{156F79}
\definecolor{walkweak}{HTML}{B97514}
\definecolor{walkbridge}{HTML}{B54F4B}
\definecolor{walkgrid}{HTML}{DDE2E6}
\begin{minipage}[t]{0.32\linewidth}
\centering
{\small\bfseries (a) Strict excursion}\par\smallskip
\begin{tikzpicture}[x=0.29cm,y=0.29cm,font=\scriptsize]
\path[use as bounding box] (-1.5,-5.15) rectangle (14.7,9.5);
\fill[black!2] (0,-3) rectangle (14,9);
\foreach \y in {-3,3,6,9} {\draw[walkgrid,line width=0.25pt] (0,\y)--(14,\y);}
\draw[black!60,line width=0.65pt] (0,0)--(14,0);
\draw[black!45,line width=0.35pt] (0,-3)--(0,9);
\foreach \y in {-3,0,3,6,9} {\draw[black!50] (-0.13,\y)--(0,\y);\node[left,inner sep=2pt,text=black!70] at (-0.13,\y) {$\y$};}
\foreach \x in {0,7,14} {\node[below,text=black!70] at (\x,-3.1) {$\x$};}
\node[text=black!70] at (7,-4.85) {Step};
\draw[walkstrict,line width=1.2pt,line join=round] plot coordinates {(0,0) (1,4) (2,8) (3,5) (4,2) (5,6) (6,3) (7,7) (8,4) (9,8) (10,5) (11,2) (12,6) (13,3) (14,0)};
\foreach \p in {(0,0),(1,4),(2,8),(3,5),(4,2),(5,6),(6,3),(7,7),(8,4),(9,8),(10,5),(11,2),(12,6),(13,3),(14,0)} {\fill[walkstrict] \p circle[radius=1.05pt];}
\fill[walkstrict] (0,0) circle[radius=1.65pt];
\fill[walkstrict] (14,0) circle[radius=1.65pt];
\end{tikzpicture}\par
{\scriptsize Positive between its endpoints}
\end{minipage}\hfill
\begin{minipage}[t]{0.32\linewidth}
\centering
{\small\bfseries (b) Weak, not strict}\par\smallskip
\begin{tikzpicture}[x=0.29cm,y=0.29cm,font=\scriptsize]
\path[use as bounding box] (-1.5,-5.15) rectangle (14.7,9.5);
\fill[black!2] (0,-3) rectangle (14,9);
\foreach \y in {-3,3,6,9} {\draw[walkgrid,line width=0.25pt] (0,\y)--(14,\y);}
\draw[black!60,line width=0.65pt] (0,0)--(14,0);
\draw[black!45,line width=0.35pt] (0,-3)--(0,9);
\foreach \y in {-3,0,3,6,9} {\draw[black!50] (-0.13,\y)--(0,\y);\node[left,inner sep=2pt,text=black!70] at (-0.13,\y) {$\y$};}
\foreach \x in {0,7,14} {\node[below,text=black!70] at (\x,-3.1) {$\x$};}
\node[text=black!70] at (7,-4.85) {Step};
\draw[walkweak,line width=1.2pt,line join=round] plot coordinates {(0,0) (1,4) (2,8) (3,5) (4,2) (5,6) (6,3) (7,0) (8,4) (9,8) (10,5) (11,2) (12,6) (13,3) (14,0)};
\foreach \p in {(0,0),(1,4),(2,8),(3,5),(4,2),(5,6),(6,3),(7,0),(8,4),(9,8),(10,5),(11,2),(12,6),(13,3),(14,0)} {\fill[walkweak] \p circle[radius=1.05pt];}
\fill[walkweak] (0,0) circle[radius=1.65pt];
\fill[walkweak] (14,0) circle[radius=1.65pt];
\draw[walkweak,line width=1pt,fill=white] (7,0) circle[radius=2.6pt];
\end{tikzpicture}\par
{\scriptsize An interior return at step 7}
\end{minipage}\hfill
\begin{minipage}[t]{0.32\linewidth}
\centering
{\small\bfseries (c) Returning walk}\par\smallskip
\begin{tikzpicture}[x=0.29cm,y=0.29cm,font=\scriptsize]
\path[use as bounding box] (-1.5,-5.15) rectangle (14.7,9.5);
\fill[black!2] (0,-3) rectangle (14,9);
\fill[walkbridge!7] (0,-3) rectangle (14,0);
\foreach \y in {-3,3,6,9} {\draw[walkgrid,line width=0.25pt] (0,\y)--(14,\y);}
\draw[black!60,line width=0.65pt] (0,0)--(14,0);
\draw[black!45,line width=0.35pt] (0,-3)--(0,9);
\foreach \y in {-3,0,3,6,9} {\draw[black!50] (-0.13,\y)--(0,\y);\node[left,inner sep=2pt,text=black!70] at (-0.13,\y) {$\y$};}
\foreach \x in {0,7,14} {\node[below,text=black!70] at (\x,-3.1) {$\x$};}
\node[text=black!70] at (7,-4.85) {Step};
\draw[walkbridge,line width=1.2pt,line join=round] plot coordinates {(0,0) (1,4) (2,1) (3,-2) (4,2) (5,6) (6,3) (7,0) (8,4) (9,8) (10,5) (11,2) (12,6) (13,3) (14,0)};
\foreach \p in {(0,0),(1,4),(2,1),(3,-2),(4,2),(5,6),(6,3),(7,0),(8,4),(9,8),(10,5),(11,2),(12,6),(13,3),(14,0)} {\fill[walkbridge] \p circle[radius=1.05pt];}
\fill[walkbridge] (0,0) circle[radius=1.65pt];
\fill[walkbridge] (14,0) circle[radius=1.65pt];
\draw[walkbridge,line width=1pt,fill=white] (3,-2) circle[radius=2.6pt];
\end{tikzpicture}\par
{\scriptsize Falls below zero at step 3}
\end{minipage}
\caption{Walks with jumps $+4$ and $-3$. Each has six upward and eight downward steps and returns to zero after fourteen steps. (a) A strict excursion. (b) A weak excursion with an interior return to zero at step 7. (c) A returning walk that reaches height $-2$ and is therefore not an excursion. The vertical coordinate records height; the horizontal coordinate records the number of steps.}
\label{figwalkexamples}
\end{figure}

These formulas include Duchon's sequence \seqnum{A060941}, the corresponding strict excursion sequence \seqnum{A293946} at positive indices, and the weak excursion sequences \seqnum{A300386}, \seqnum{A300390}, and \seqnum{A300391} in the On-Line Encyclopedia of Integer Sequences~\cite{OEIS}. When one jump has magnitude one, they recover the integer-parameter Fuss--Catalan and Raney sequences. The contribution is a common product representation for these established excursion sequences.

To express the excursion counts in one coefficient family, we use the two-parameter Fuss--Catalan coefficients. For $\alpha>1$, let $F_\alpha(x)$ be the formal power series with constant term one satisfying
\[
 F_\alpha=1+xF_\alpha^\alpha.
\]
For $\beta>0$ and a nonnegative integer $m$, define
\begin{equation}\label{eqfcintro}
 C_{\alpha,\beta}(m)=[x^m]F_\alpha(x)^\beta
 =\frac{\beta}{\alpha m+\beta}\binom{\alpha m+\beta}{m}.
\end{equation}
Brackets denote coefficient extraction: $[x^m]f(x)$ is the coefficient of $x^m$ in $f(x)$. With two variables, $[t^a x^b]f(t,x)$ is the coefficient of $t^a x^b$ in $f(t,x)$. The same convention applies with any choice of variables. When $\alpha$ is an integer, $\beta=1$ gives the usual Fuss--Catalan numbers, and positive integer $\beta$ gives their Raney extension. Our formulas use positive rational values of $\alpha$ and $\beta$, so the individual coefficients need not be integers.

Raney~\cite{Raney} develops the composition and reversion framework behind these coefficients, and Gessel~\cite[Section~3.3]{Gessel2016} gives the power identity in~\eqref{eqfcintro}. Lin~\cite{Lin} treats their integer-parameter interpretations. Partition-indexed Fuss--Catalan product identities also arise from the determinant expansions of Goy and Shattuck~\cite{GoyShattuck}. Here the partitioned integer is the jump $q$, and the parameters and residue restrictions of the factors are determined by $p,q$ and the partition parts.

The enumeration of rational Dyck paths is well known in the literature. Bizley~\cite{Bizley} proves Grossman's formula and counts paths with prescribed numbers of boundary contacts, covering both weak and strict excursions. Birmajer, Gil, and Weiner~\cite{Birmajer} express these counts using Bell polynomials. The factor-free words studied in these works and by Duchon~\cite{Duchon} satisfy a stronger restriction than strict excursions. A strict excursion starts and ends at zero and stays strictly positive in between. Within such an excursion, a shorter segment may start and end at the same positive height and stay at or above that height throughout. Such a segment is a Dyck factor above zero.

Put $h=p+q$, and let $E(z)$ and $T(z)$ be the generating functions for weak and nonempty strict excursions, respectively, with $z^n$ marking length $hn$. The empty path contributes one to $E(z)$. Cutting a weak excursion at its interior returns to zero gives a unique ordered sequence of strict excursions. Multiplication counts successive choices of pieces, so
\[
 E(z)=1+T(z)+T(z)^2+\cdots=\frac{1}{1-T(z)}.
\]
The Grossman--Bizley formula for $E(z)$ and the resulting expression for $T(z)$ are
\begin{equation}\label{eqbizley}
 E(z)=\exp\!\left(\sum_{j\ge1}\frac{1}{hj}\binom{hj}{qj}z^j\right),
 \qquad T(z)=1-E(z)^{-1}.
\end{equation}
Expanding the exponential results in sums indexed by partitions of the path size $n$. Theorem~\ref{thmmain} instead indexes the products by partitions of the jump magnitude $q$: the weak formula alternates in sign with the number of partition parts, whereas every strict summand is positive.

The proof uses the walk factorization of Gessel~\cite{Gessel1980} and Xin~\cite{Xin}, obtained by cutting an unrestricted walk at its first and last visits to its minimum height. The middle piece is a weak excursion relative to that height. Deleting the final $-q$ step of a strict excursion leaves a positive walk ending at $q$. Expanding the positive endpoint factor distributes the endpoint height $q$ among partition parts and gives the strict product formula. The weak formula comes from the reciprocal of the negative endpoint factor, whose expansion produces the alternating signs. Section~\ref{secstructure} gives a combinatorial interpretation of the integer multiples of the Fuss--Catalan factors.

Related convolution formulas have been obtained using the kernel method for paths below a line of rational slope. Sato's matrix formulation~\cite{Sato} is reproduced by Krattenthaler~\cite[Section~10.5]{Krattenthaler}; Banderier and Flajolet~\cite[Example~5]{BF} give a convolution of generalized binomial coefficients whose number of factors is independent of the path size, and attribute it to Sato. Banderier and Wallner~\cite{BW} use roots of unity and Lagrange inversion, while Bousquet-M\'elou~\cite{Bousquet} expresses elementary symmetric functions in terms of power sums. In contrast, here we express weak and strict excursion counts as Fuss--Catalan products with explicit congruence conditions and positive parameters in the strict formula.

Panfil, Sto\v{s}i\'c, and Su\l{}kowski~\cite{Panfil} reproduce a root-of-unity convolution in their study of lattice paths and torus knots. Ek~\cite{Ek} develops computational methods for lattice-walk generating functions, and Firoozi, Jedwab, and Rattan~\cite{Firoozi} count paths according to the number of lattice points above a boundary of rational slope. The minimal bridges studied by Lou and Brown~\cite{LouBrown} avoid interior lattice points on the boundary but may cross it, whereas strict excursions stay strictly on one side between their endpoints.

The authors' original motivation came from additive wealth processes~\cite{VG,VVMG}, which also provide an application context. For independent increments $+p$ and $-q$ with fixed probabilities, all walks with fixed numbers of upward and downward steps have the same probability of occurring. In Section~\ref{secprobability}, we use this fact to convert the excursion counts into probabilities of weak and strict returns. We also give probabilities for first entry at or below zero from initial height $p$, distinguishing exact landings at zero from overshoots.

Section~\ref{secmain} states the product formulas; Section~\ref{secproof} derives them from endpoint factorization and then gives their shifted forms. Their integer scalings are developed in Section~\ref{secstructure}. Sections~\ref{secexamples} and~\ref{secverification} give sequence specializations and OEIS indexing. Appendix~\ref{appformulas} collects expanded binomial formulas.

\section{Fuss--Catalan product formulas}\label{secmain}
Throughout, $p,q$ are coprime positive integers and $h=p+q$. We index returning walks by their size $n$, so size $n$ means $hn$ steps. For a walk represented by an increment sequence $(x_1,\ldots,x_{hn})$ in $\{p,-q\}$, put $H_0=0$ and $H_j=x_1+\cdots+x_j$ for $1\le j\le hn$. For $n\ge0$, let $E_n^{(p,q)}$ count weak excursions of size $n$, that is, walks with $H_{hn}=0$ and $H_j\ge0$ for $0\le j\le hn$. For $n\ge1$, let $T_n^{(p,q)}$ count strict excursions of size $n$, with $H_{hn}=0$ and $H_j>0$ for $1\le j<hn$. We take $E_0^{(p,q)}=1$ and $T_0^{(p,q)}=0$. Let $u$ and $d$ be the numbers of upward and downward steps. A return to zero requires $pu=qd$, and the total length gives $u+d=hn$. Together these equations yield $u=qn$ and $d=pn$. Replacing each upward step by an east step and each downward step by a north step gives a lattice path from $(0,0)$ to $(qn,pn)$. At a vertex $(x,y)$, the height of the original walk is $px-qy$. This implies that a weak excursion corresponds to a path on or below the line $y=(p/q)x$. Furthermore, a strict excursion corresponds to a path that starts and ends on this line and stays strictly below it in between.

Write $\lambda=(\lambda_1,\ldots,\lambda_\ell)\vdash q$ for a partition of $q$ into $\ell=\ell(\lambda)$ positive parts, listed in nonincreasing order. Let $m_r(\lambda)$ be the number of occurrences of the part $r$, and define
\begin{equation}\label{eqzlambda}
 m_r(\lambda)=\#\{j:\lambda_j=r\},
 \qquad z_\lambda=\prod_{r=1}^q r^{m_r(\lambda)}m_r(\lambda)!.
\end{equation}
\par\noindent\begin{minipage}{\linewidth}
For $q=3$, we obtain:
\begin{center}
\begin{tabular}{@{}cccc@{}}
\toprule
$\lambda$ & $\ell(\lambda)$ & Nonzero multiplicities & $z_\lambda$\\
\midrule
$(3)$ & $1$ & $m_3=1$ & $3^1\,1!=3$\\
$(2,1)$ & $2$ & $m_2=m_1=1$ & $2^1\,1!\,1^1\,1!=2$\\
$(1,1,1)$ & $3$ & $m_1=3$ & $1^3\,3!=6$\\
\bottomrule
\end{tabular}
\par\smallskip
{\small Partitions of $3$, their lengths and nonzero multiplicities, and the corresponding values of $z_\lambda$.\par}
\end{center}
\end{minipage}\par
In the last row, $m_1=3$ gives the factor $3!$; all other multiplicities are zero. For every path size $n$, the formulas below sum over these same partitions of $q$. All summation indices are nonnegative integers. A sum over an empty index set is zero, and each displayed congruence is imposed separately on every factor. Congruences modulo~$1$ impose no restriction.

We use the coefficients $C_{\alpha,\beta}(m)$ defined in~\eqref{eqfcintro}. In the weak formula, their parameters are $\alpha=h/q$ and $\beta=\lambda_j/q$, and their index $u_j$ is the number of upward steps in the associated auxiliary walk. In the strict formula, their parameters are $\alpha=h/p$ and $\beta=\lambda_j/p$, and their index $d_j$ is the number of downward steps in its auxiliary walk.

\begin{theorem}[Fuss--Catalan product formulas]\label{thmmain}
For $n\ge0$, the weak excursion number is
\begin{equation}\label{eqmainweak}
\displaystyle
 E_n^{(p,q)}=
 \sum_{\lambda\vdash q}\frac{(-1)^{\ell-1}q^\ell}{z_\lambda}
 \sum_{\substack{u_1+\cdots+u_\ell=qn\\
                    p u_j+\lambda_j\equiv0\ (\mathrm{mod}\ q)}}
 \prod_{j=1}^{\ell}C_{h/q,\,\lambda_j/q}(u_j).
\end{equation}
For $n\ge1$, the strict excursion number is
\begin{equation}\label{eqmainstrict}
\displaystyle
 T_n^{(p,q)}=
 \sum_{\lambda\vdash q}\frac{p^\ell}{z_\lambda}
 \sum_{\substack{d_1+\cdots+d_\ell=pn-1\\
                    q d_j+\lambda_j\equiv0\ (\mathrm{mod}\ p)}}
 \prod_{j=1}^{\ell}C_{h/p,\,\lambda_j/p}(d_j).
\end{equation}
All Fuss--Catalan factors have positive rational parameters, nonnegative integer indices, and integer upper binomial arguments. Every summand in~\eqref{eqmainstrict} is positive.
\end{theorem}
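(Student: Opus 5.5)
The plan is to use the kernel/Wiener–Hopf factorization of unrestricted walks together with Lagrange inversion. I attach a variable $t$ marking steps and $y$ marking height, so the step polynomial is $t(y^p+y^{-q})$. The kernel $1-t(y^p+y^{-q})$, multiplied by $y^q$, has $q$ small roots $Y_1(t),\dots,Y_q(t)$ (those tending to $0$ as $t\to0$) and $p$ large roots.

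Cutting an unrestricted walk at its first and last visits to its minimum height (the Gessel–Xin factorization cited in the introduction) gives
\[
\frac{1}{1-t(y^p+y^{-q})}=W_-(t,y^{-1})\,E(t)\,W_+(t,y),
\]
where $W_-$ counts walks ending at their strict minimum and $W_+$ counts positive meanders. The negative factor is $W_-=\prod_{i\le q}(1-Y_i/y)^{-1}$, and the positive factor is expressed through the large roots. Comparing the constant coefficients yields the standard identities
\[
E=\frac{(-1)^{q-1}}{t\,\prod_i Y_i},
\]
and, by deleting the final $-q$ step, the strict generating function $T=t\,[y^q]\,W_+(t,y)$. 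Here $[y^q]W_+$ is the elementary symmetric function in the reciprocals of the $p$ large roots (suitably normalized), i.e.\ the complete homogeneous symmetric function $h_q$ in the large-root quantities.

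Next I expand via power sums. Any complete or elementary symmetric function of degree $q$ is $\sum_{\lambda\vdash q} z_\lambda^{-1}\,(\pm1)^{q-\ell}\,\prod_j p_{\lambda_j}$. For the weak formula I will write $1/\prod Y_i$ in terms of $e_q$ of the small roots, which equals $(-1)^{q-1}$ times the constant term of the kernel polynomial divided by $t$. Expanding $e_q=\sum_\lambda \varepsilon_\lambda z_\lambda^{-1}\prod_j p_{\lambda_j}$ produces the sign $(-1)^{\ell-1}$. The core computation is that each power sum $\sum_i Y_i^{r}$ has an explicit series by Lagrange inversion. Setting $Y^q=t(1+Y^{h})$ and writing $Y=s^{1/q}$ with $s=tF_{h/q}(\cdot)$-type substitution, the $q$ conjugates $Y_i=\omega^i t^{1/q}F_{h/q}(t^{h/q})^{1/q}$ sum to give
\[
\sum_i Y_i^{r}=q\sum_{u}C_{h/q,\,r/q}(u)\,t^{(pu+r)/q+u}
\]
restricted to $pu+r\equiv0 \pmod q$. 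The root-of-unity filter is what produces both the congruence and the factor $q$ per part, giving $q^\ell$.

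Extracting $[t^{hn}]$ then forces $\sum u_j=qn$, after accounting for the leading $t^{-1}$ and checking the exponent bookkeeping $(p u+\lambda)/q+u$ summed over parts. The strict formula is handled identically, using the $p$ large roots: there one writes $1/y_{\text{large}}$ as roots of $Z^p=t(1+Z^h)$, so that Lagrange inversion gives $C_{h/p,\lambda_j/p}(d_j)$ with the congruence $qd_j+\lambda_j\equiv0\pmod p$. In this case the complete homogeneous expansion $h_q=\sum z_\lambda^{-1}\prod p_{\lambda_j}$ carries no signs, so every summand is positive, and the deleted step shifts the index sum to $pn-1$. Positivity of the parameters and integrality of the upper arguments $(h u+\lambda)/q$ follow from the congruences.

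The main obstacle will be getting the exact normalization of the factorization and of $W_+$ right, namely identifying $[y^q]W_+$ as $h_q$ of the correct large-root quantities with the correct power of $t$, and verifying the exponent arithmetic. I would calibrate these against $q=1$, where the weak formula should reduce to the Fuss–Catalan numbers and the strict formula to the shifted Raney numbers.
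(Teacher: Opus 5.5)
Your route is viable and genuinely different from the paper's, but as written the weak branch contains two formulas that would make it fail. First, $E=(-1)^{q-1}/(t\prod_iY_i)$ is wrong. Since $\prod_iY_i\sim(-1)^{q-1}t$, that expression is of order $t^{-2}$ and is not a power series at all. The correct identity is $E(t^h)=(-1)^{q-1}t^{-1}\prod_iY_i=(-1)^{q-1}e_q(Y)/t$; you may have conflated it with the large-root form $(-1)^{p-1}/(t\prod_jZ_j)$.

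Only with $e_q(Y)$ in the numerator does your expansion $e_q=\sum_{\lambda\vdash q}(-1)^{q-\ell}z_\lambda^{-1}p_\lambda$ produce the sign $(-1)^{\ell-1}$ and, after the factor $t^{-1}$, the constraint $\sum_ju_j=qn$. The reciprocal $1/\prod_iY_i=e_q(Y_1^{-1},\dots,Y_q^{-1})$ would instead lead to power sums of negative powers of the roots.

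Second, the conjugate roots are $Y_i=\omega^it^{1/q}F_{h/q}(\omega^{ih}t^{h/q})^{1/q}$, not $\omega^it^{1/q}F_{h/q}(t^{h/q})^{1/q}$. Your expression is not a root for $i\neq0$, and it would make the filter kill every term with $q\nmid r$. It is the factor $\omega^{ih}=\omega^{ip}$ inside $F$ that selects $pu+r\equiv0\pmod q$. Your displayed formula for $\sum_iY_i^r$ is nonetheless the correct one.

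In the strict branch, $[y^q]W_+=[y^q]\prod_{j=1}^p(1-y/Z_j)^{-1}$ is the complete homogeneous function $h_q(1/Z_1,\dots,1/Z_p)$, not an elementary one, which would vanish for $q>p$ and carry signs. Since you then actually use $h_q$, with the $1/Z_j$ taken as the small roots of $w^p=t(1+w^h)$, that branch goes through as you describe.

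With these repairs, your argument and the paper's prove the same identities. One has $L_r^{\downarrow}=p_r(Y)/r$ and $L_r^{\uparrow}=p_r(1/Z)/r$. The paper's relation $[x^{-q}]\exp(-L_-)=-tE(t^h)$ is exactly the corrected formula for $E$. Its partition expansion of $[y^q]\exp(\varepsilon\sum_rB_ry^r)$ for $\varepsilon=-1$ and $\varepsilon=+1$ is the power-sum expansion of $(-1)^qe_q$ and $h_q$.

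The difference lies in how the endpoint series are obtained. The paper introduces no roots. From $\log W=\sum_Kt^KP(x)^K/K$ it reads off the coefficients of $x^{\pm r}$ directly: each is the number $\binom{K}{u}$ of unrestricted walks of length $K$ ending at $\pm r$, divided by $K$. So Lagrange inversion and the root-of-unity filter are unnecessary, and the congruences appear simply as integrality of the other step count. Everything also stays in $\mathbb{Q}[x,x^{-1}][[t]]$, without Puiseux series or a separate check that $\prod_i(1-Y_i/y)^{-1}$ really is the negative factor.

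Your version, in exchange, places the result inside the Banderier--Flajolet kernel framework and makes the sign dichotomy transparent. The alternating signs come from $e_q$ of the $q$ small roots, and the positivity from $h_q$ of the $p$ reciprocal large roots.
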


\noindent\emph{Symmetry.} Reading a returning walk backwards and changing the sign of each increment gives a walk with jumps $+q,-p$ and the same heights in reverse order. Thus both weak and strict excursion counts are preserved:
\[
 E_n^{(p,q)}=E_n^{(q,p)},\qquad T_n^{(p,q)}=T_n^{(q,p)}.
\]
Applying Theorem~\ref{thmmain} with $p$ and $q$ exchanged gives sums over partitions of $p$. After normalization, we may therefore use partitions of $\min(p,q)$. Each such partition has at most $\min(p,q)$ parts, so every product has at most that many factors. Some partitions may contribute no terms at a given path size; the shifts in Corollary~\ref{corshifted} determine which can contribute.

In the displayed formulas, first choose a partition $\lambda$ of $q$, then one nonnegative index for each part. In the proof, $u_j$ is the number of upward steps in the unrestricted walk associated with part $\lambda_j$ in the weak formula, while $d_j$ is the number of downward steps in the corresponding walk for the strict formula. The upward-step indices must sum to $qn$; the downward-step indices must sum to $pn-1$. In addition, $pu_j+\lambda_j$ must be divisible by $q$ in the weak formula, and $qd_j+\lambda_j$ must be divisible by $p$ in the strict formula. These divisibility conditions ensure that the number of steps of the other type is an integer. Each allowed combination of indices contributes a product of $\ell$ Fuss--Catalan coefficients, multiplied by $(-1)^{\ell-1}q^\ell/z_\lambda$ in the weak formula or $p^\ell/z_\lambda$ in the strict formula. The partitions depend only on $q$; changing $n$ changes which combinations of indices satisfy these conditions.

For example, take $(p,q,n)=(4,3,1)$ and the partition $\lambda=(2,1)$ in the strict formula. Here $\ell=2$ and $z_\lambda=2$, so the weight is $4^2/2=8$. The conditions $d_1+d_2=3$, $3d_1+2\equiv0\pmod4$, and $3d_2+1\equiv0\pmod4$ allow only $(d_1,d_2)=(2,1)$. Its contribution is
\[
 8\,C_{7/4,1/2}(2)\,C_{7/4,1/4}(1)
 =8\cdot\frac34\cdot\frac14=\frac32.
\]
This fractional value is an algebraic contribution to the total count; it does not count a separate class of paths. Adding all three partition contributions gives the integer excursion count $5$.

For a chosen partition $\lambda=(\lambda_1,\ldots,\lambda_\ell)$, each part $\lambda_j$ supplies one Fuss--Catalan coefficient
\[
 C_{h/q,\lambda_j/q}(u_j)
\]
to a product in the formula for $E_n^{(p,q)}$. These individual coefficients are the factors in that product. The products in the formula for $T_n^{(p,q)}$ instead contain the coefficients $C_{h/p,\lambda_j/p}(d_j)$. In the proof, the coefficients used for $E_n^{(p,q)}$ come from unrestricted walks ending at $-\lambda_j$, and those used for $T_n^{(p,q)}$ from unrestricted walks ending at $+\lambda_j$. Once the final height is fixed, knowing the number of upward steps determines the number of downward steps, and conversely. The congruences ensure that the calculated number of steps is an integer.

Write $K=u+d$ for the total number of steps in a walk. For example, take jumps $+4$ and $-3$ and the part $\lambda_j=1$ in the formula for $E_n^{(p,q)}$. The associated walks must end at $-1$. With $u$ upward and $d$ downward steps, their final height is $4u-3d$, so
\[
 4u-3d=-1,\qquad d=\frac{4u+1}{3}.
\]
Choosing $u=1$ would require $d=5/3$ downward steps, which is impossible. Choosing $u=2$ gives $d=3$, hence $K=5$, as illustrated in Figure~\ref{figcongruencesteps}. The allowed values are $u=2,5,8,\ldots$. Thus the condition $4u+1\equiv0\pmod3$ simply says that $4u+1$ must be divisible by $3$ for the walk to exist.

\begin{figure}[ht]
\centering
\definecolor{countvalid}{HTML}{156F79}
\definecolor{countinvalid}{HTML}{B54F4B}
\definecolor{countgrid}{HTML}{DDE2E6}
{\bfseries Jumps $+4,-3$, final height $-1$}\par\smallskip
\(4u-3d=-1\quad\Longrightarrow\quad d=(4u+1)/3\)\par\medskip
\begin{minipage}[t]{0.47\linewidth}
\centering
{\small\bfseries One allowed index: $u=2$}\par\smallskip
\begin{tikzpicture}[x=0.93cm,y=0.44cm,font=\scriptsize]
\path[use as bounding box] (-0.65,-3.9) rectangle (6.8,7.05);
\fill[black!2] (0,-1.4) rectangle (5,5.7);
\foreach \y in {2,4} {\draw[countgrid,line width=0.3pt] (0,\y)--(5,\y);}
\draw[black!50,line width=0.4pt] (0,-1.4)--(0,5.7);
\draw[black!65,line width=0.6pt] (0,0)--(5,0);
\draw[countinvalid!65,dashed] (0,-1)--(5,-1);
\foreach \y in {-1,0,2,4} {\node[left,inner sep=2pt,text=black!70] at (0,\y) {$\y$};}
\foreach \x in {0,1,2,3,4,5} {\node[below,text=black!70] at (\x,-1.65) {$\x$};}
\node[text=black!70] at (2.5,-3.25) {Step};
\node[anchor=west,text=black!70] at (-0.1,6.45) {Height};
\draw[countvalid,line width=1.3pt,line join=round] plot coordinates {(0,0) (1,4) (2,1) (3,5) (4,2) (5,-1)};
\foreach \p in {(0,0),(1,4),(2,1),(3,5),(4,2),(5,-1)} {\fill[countvalid] \p circle[radius=1.6pt];}
\node[right,inner sep=3pt,text=countinvalid] at (5,-1) {Endpoint $-1$};
\end{tikzpicture}\par\smallskip
{\small Jumps: \(\textcolor{countvalid}{+4},\ \textcolor{countinvalid}{-3},\ \textcolor{countvalid}{+4},\ \textcolor{countinvalid}{-3},\ \textcolor{countinvalid}{-3}\)}\par\smallskip
{\small Two upward steps, three downward steps.}\par
{\small Total length $K=2+3=5$.}
\end{minipage}\hfill
\begin{minipage}[t]{0.47\linewidth}
\centering
{\small\bfseries Which indices give whole steps?}\par\smallskip
\renewcommand{\arraystretch}{1.5}
\begin{tabular}{ccc}
\toprule
$u$ & Required $d$ & Walk possible?\\
\midrule
$0$ & $1/3$ & \textcolor{countinvalid}{No}\\
$1$ & $5/3$ & \textcolor{countinvalid}{No}\\
$2$ & $3$ & \textcolor{countvalid}{Yes}\\
$3$ & $13/3$ & \textcolor{countinvalid}{No}\\
$4$ & $17/3$ & \textcolor{countinvalid}{No}\\
$5$ & $7$ & \textcolor{countvalid}{Yes}\\
\bottomrule
\end{tabular}\par\medskip
{\small The allowed indices are $u=2,5,8,\ldots$}\par\smallskip
{\small Equivalently, $4u+1\equiv0\pmod3$.}
\end{minipage}\par\medskip
\fbox{\begin{minipage}{0.9\linewidth}\centering\small
For $u=2$, the binomial coefficient is $\binom{5}{2}=10$: choose which two of the five steps are upward. Each of these ten walks ends at $-1$.
\end{minipage}}
\caption{The congruence selects integer step counts. The displayed walk is one of the ten unrestricted walks with two upward and three downward steps. Their intermediate heights are unrestricted. Only the binomial coefficient counts these walks; the Fuss--Catalan factor also contains a multiplier.}
\label{figcongruencesteps}
\end{figure}

In general, a walk with $u_j$ upward steps gains height $pu_j$. To end at $-\lambda_j$, its downward steps must remove this height plus another $\lambda_j$. Their number is $(pu_j+\lambda_j)/q$, which explains the congruence in the formula for $E_n^{(p,q)}$:
\[
 pu_j+\lambda_j\equiv0\pmod q.
\]
For the auxiliary walk associated with part $\lambda_j$, write $K_j$ for its total number of steps. We obtain
\[
 K_j-u_j=\frac{pu_j+\lambda_j}{q},
 \qquad K_j=\frac{hu_j+\lambda_j}{q}.
\]
For the formula for $T_n^{(p,q)}$, we instead specify $d_j$ downward steps. These lose height $qd_j$, so reaching $+\lambda_j$ requires $(qd_j+\lambda_j)/p$ upward steps. This gives its congruence
\[
 qd_j+\lambda_j\equiv0\pmod p
\]
and
\[
 K_j-d_j=\frac{qd_j+\lambda_j}{p},
 \qquad K_j=\frac{hd_j+\lambda_j}{p}.
\]
Every walk ending at the given height satisfies the corresponding congruence, which excludes only indices that would require fractional numbers of steps.

Substituting the parameters of these coefficients into their definition gives
\[
 \begin{aligned}
 C_{h/q,\lambda_j/q}(u_j)&=\frac{\lambda_j}{qK_j}\binom{K_j}{u_j},\\
 C_{h/p,\lambda_j/p}(d_j)&=\frac{\lambda_j}{pK_j}\binom{K_j}{d_j}.
 \end{aligned}
\]
The number on top of each binomial coefficient is $K_j$, the total number of step positions. The first binomial coefficient chooses which $u_j$ positions contain upward steps; the second chooses which $d_j$ positions contain downward steps. The congruences make $K_j$ an integer, and $\lambda_j>0$ makes the other step count positive, so these are ordinary binomial coefficients. Their multipliers can be fractional: in the example, $\binom52=10$ counts the walks, while $C_{7/3,1/3}(2)=(1/15)\cdot10=2/3$.

When $n=0$ in~\eqref{eqmainweak}, all $u_j$ are zero. The congruences then require every part $\lambda_j$ to be divisible by $q$. Since the parts sum to $q$, only $\lambda=(q)$ contributes, giving $E_0=1$.

\section{Splitting walks into three pieces}\label{secproof}
We start with the generating function for unrestricted walks. Cutting a walk at the first and last visits to its lowest height gives an initial, a middle and a final piece. By using this decomposition, we can express the generating function as a product of three series, giving the classical factorization~\cite{Gessel1980,Xin,BLW}. From this product we derive identities that recover $E_n^{(p,q)}$ and $T_n^{(p,q)}$ by extracting endpoint coefficients at $-q$ and $+q$, respectively. Expanding the corresponding exponentials produces sums indexed by partitions of $q$. Finally, we express the walk counts divided by their lengths in terms of Fuss--Catalan coefficients to obtain the two product formulas. Subsection~\ref{subsecreindexing} then incorporates the congruence restrictions into a new choice of indices.

\subsection{The positive, zero and negative factors}
Let $W(t,x)$ be the generating function for unrestricted walks, with $t$ marking the number of steps and $x$ their final height. Put $P(x)=x^p+x^{-q}$. In $P(x)^K$, choosing one of the two terms from each factor specifies an ordered sequence of $K$ steps; multiplying their weights adds the height changes. Thus $[x^r]P(x)^K$ counts the unrestricted walks of length $K$ ending at $r$. Summing over all lengths, including the empty walk, gives
\[
 W(t,x)=1+tP(x)+t^2P(x)^2+\cdots=\frac{1}{1-tP(x)}.
\]
For example, with jumps $+3$ and $-2$,
\[
 P(x)^2=x^6+2x+x^{-4}.
\]
The coefficient $2$ of $x$ records the two walks $(+3,-2)$ and $(-2,+3)$, both of which end at height $1$.

The formal identity $-\log(1-w)=\sum_{K\ge1}w^K/K$ now gives
\[
 \log W(t,x)=\sum_{K\ge1}\frac{t^K}{K}P(x)^K.
\]
Consequently, each length-$K$ walk contributes weight $1/K$ to the logarithm. Both $W$ and $\log W$ belong to $\Q[x,x^{-1}][[t]]$: they are series in $t$ whose coefficients are Laurent polynomials in $x$.

By taking the logarithm, we turn a product decomposition into a sum. We separate its terms according to whether the exponent of $x$ is negative, zero or positive:
\begin{equation}\label{eqsplit}
 \log W=L_-(t,x)+L_0(t)+L_+(t,x).
\end{equation}
Exponentiating gives
\[
 W=\exp(L_-+L_0+L_+)=\exp(L_-)\exp(L_0)\exp(L_+).
\]
Each factor has constant term one in $t$. In $\exp(L_-)$, every nonconstant term has a negative power of $x$. In $\exp(L_+)$, every nonconstant term has a positive power of $x$, while $\exp(L_0)$ is independent of $x$. These conditions determine the factorization uniquely: the logarithm of each factor has the same restriction on its powers of $x$, and so must equal the corresponding part of $\log W$.

For a walk of length $K$, let $\eta$ be its minimum height, reached first at time $i$ and last at time $j$. Cutting at these times gives pieces of lengths $i$, $j-i$ and $K-j$. Note that any of these pieces may be empty. A nonempty initial piece ends at $\eta$ and has all its earlier heights strictly above $\eta$. We can shift the middle and final pieces vertically so that they start at zero. The middle piece is a weak excursion: it may revisit zero between its endpoints. The final piece, if nonempty, has positive height at every noninitial time and is therefore a positive meander. Let $M_{>}(t,x)$ be the generating function for positive meanders, including the empty walk, with $t$ marking the number of steps and $x$ their final height. Figure~\ref{figminimumsplit} illustrates this decomposition.

\begin{figure}[tbp]
\centering
\definecolor{splitinitial}{HTML}{B54F4B}
\definecolor{splitmiddle}{HTML}{B97514}
\definecolor{splitfinal}{HTML}{156F79}
\definecolor{splitguide}{HTML}{8C969F}
\noindent\makebox[\linewidth][l]{\small\bfseries (a) Unrestricted walk\hfill\normalfont Jumps $+4,-3$}\par\smallskip
\begin{tikzpicture}[x=0.55cm,y=0.28cm,font=\scriptsize]
\path[use as bounding box] (-3,-8.9) rectangle (25.5,8.8);
\draw[black!25,line width=0.4pt] (0,-4.4)--(0,6.2);
\draw[black!25,line width=0.4pt] (0,-6.6)--(25,-6.6);
\draw[black!30,line width=0.45pt] (0,0)--(25.3,0);
\draw[splitguide,dashed,line width=0.6pt] (0,-4)--(25.3,-4);
\foreach \x in {6,20} {\draw[splitguide!65,densely dashed,line width=0.45pt] (\x,-4)--(\x,6.8);}
\foreach \y in {0,4} {\draw[black!40] (-0.12,\y)--(0,\y);\node[left,inner sep=3pt,text=black!70] at (-0.12,\y) {$\y$};}
\node[left,inner sep=3pt,text=black!70] at (-0.12,-4) {$\eta=-4$};
\node[rotate=90,text=black!70] at (-2.65,1) {Height};
\foreach \x/\lab in {0/0,6/{i=6},13/13,20/{j=20},25/25} {\draw[black!40] (\x,-6.6)--(\x,-6.85);\node[below,text=black!70,inner sep=2pt] at (\x,-6.85) {$\lab$};}
\node[text=black!70] at (12.5,-8.45) {Step};
\node[text=splitinitial] at (3,7.6) {A\quad Initial piece};
\node[text=splitmiddle] at (13,7.6) {B\quad Middle piece};
\node[text=splitfinal] at (22.5,7.6) {C\quad Final piece};
\draw[splitinitial,line width=1.2pt,line join=round] plot coordinates {(0,0) (1,4) (2,1) (3,5) (4,2) (5,-1) (6,-4)};
\draw[splitmiddle,line width=1.2pt,line join=round] plot coordinates {(6,-4) (7,0) (8,-3) (9,1) (10,-2) (11,2) (12,-1) (13,-4) (14,0) (15,4) (16,1) (17,-2) (18,2) (19,-1) (20,-4)};
\draw[splitfinal,line width=1.2pt,line join=round] plot coordinates {(20,-4) (21,0) (22,-3) (23,1) (24,-2) (25,2)};
\foreach \p in {(0,0),(1,4),(2,1),(3,5),(4,2),(5,-1),(6,-4)} {\fill[splitinitial] \p circle[radius=1.25pt];}
\foreach \p in {(6,-4),(7,0),(8,-3),(9,1),(10,-2),(11,2),(12,-1),(13,-4),(14,0),(15,4),(16,1),(17,-2),(18,2),(19,-1),(20,-4)} {\fill[splitmiddle] \p circle[radius=1.25pt];}
\foreach \p in {(20,-4),(21,0),(22,-3),(23,1),(24,-2),(25,2)} {\fill[splitfinal] \p circle[radius=1.25pt];}
\foreach \x in {6,20} {\draw[black!75,line width=0.8pt,fill=white] (\x,-4) circle[radius=2.8pt];}
\draw[splitguide,line width=0.45pt] (5.75,-4.25)--(3.8,-5.3);
\draw[splitguide,line width=0.45pt] (20.25,-4.25)--(22.2,-5.3);
\node[below,inner sep=1pt] at (3.5,-5.3) {First minimum};
\node[below,inner sep=1pt] at (22.5,-5.3) {Last minimum};
\end{tikzpicture}\par\medskip
\noindent\makebox[\linewidth][l]{\small\bfseries (b) The three factors}\par\medskip
\begin{minipage}[t]{0.29\linewidth}
\centering
{\small\color{splitinitial} A\quad Initial piece}\par\smallskip
\begin{tikzpicture}[x=0.55cm,y=0.22cm,font=\scriptsize]
\path[use as bounding box] (-1.5,-7.2) rectangle (6.35,10);
\draw[black!25,line width=0.4pt] (0,-4.4)--(0,9);
\draw[black!25,line width=0.4pt] (0,-5)--(6.2,-5);
\draw[black!30,line width=0.45pt] (0,0)--(6.2,0);
\draw[splitguide,dashed,line width=0.55pt] (0,-4)--(6.2,-4);
\foreach \y in {-4,0,4,8} {\node[left,inner sep=3pt,text=black!70] at (0,\y) {$\y$};}
\foreach \x in {0,6} {\draw[black!40] (\x,-5)--(\x,-5.25);\node[below,inner sep=2pt,text=black!70] at (\x,-5.25) {$\x$};}
\node[text=black!70] at (3,-7) {Local step};
\draw[splitinitial,line width=1.1pt,line join=round] plot coordinates {(0,0) (1,4) (2,1) (3,5) (4,2) (5,-1) (6,-4)};
\foreach \p in {(0,0),(1,4),(2,1),(3,5),(4,2),(5,-1),(6,-4)} {\fill[splitinitial] \p circle[radius=1.2pt];}
\draw[splitinitial,line width=0.8pt,fill=white] (6,-4) circle[radius=2.4pt];
\end{tikzpicture}\par
{\scriptsize First minimum\\at its endpoint}\par\smallskip
{\footnotesize $\exp L_-(t,x)$}
\end{minipage}\hfill
\begin{minipage}[t]{0.40\linewidth}
\centering
{\small\color{splitmiddle} B\quad Middle piece}\par\smallskip
\begin{tikzpicture}[x=0.40cm,y=0.22cm,font=\scriptsize]
\path[use as bounding box] (-1.5,-7.2) rectangle (14.35,10);
\draw[black!25,line width=0.4pt] (0,-4.4)--(0,9);
\draw[black!25,line width=0.4pt] (0,-5)--(14.2,-5);
\draw[black!30,line width=0.45pt] (0,0)--(14.2,0);
\foreach \y in {-4,0,4,8} {\node[left,inner sep=3pt,text=black!70] at (0,\y) {$\y$};}
\foreach \x in {0,7,14} {\draw[black!40] (\x,-5)--(\x,-5.25);\node[below,inner sep=2pt,text=black!70] at (\x,-5.25) {$\x$};}
\node[text=black!70] at (7,-7) {Local step};
\draw[splitmiddle,line width=1.1pt,line join=round] plot coordinates {(0,0) (1,4) (2,1) (3,5) (4,2) (5,6) (6,3) (7,0) (8,4) (9,8) (10,5) (11,2) (12,6) (13,3) (14,0)};
\foreach \p in {(0,0),(1,4),(2,1),(3,5),(4,2),(5,6),(6,3),(7,0),(8,4),(9,8),(10,5),(11,2),(12,6),(13,3),(14,0)} {\fill[splitmiddle] \p circle[radius=1.2pt];}
\foreach \x in {0,7,14} {\draw[splitmiddle,line width=0.8pt,fill=white] (\x,0) circle[radius=2.4pt];}
\draw[splitguide,line width=0.45pt] (7,-0.5)--(7,-2.6);
\node[below,inner sep=1pt] at (7,-2.7) {Interior return allowed};
\end{tikzpicture}\par
{\scriptsize Weak excursion\\shifted by $-\eta$}\par\smallskip
{\footnotesize $E(t^h)=\exp L_0(t)$}
\end{minipage}\hfill
\begin{minipage}[t]{0.29\linewidth}
\centering
{\small\color{splitfinal} C\quad Final piece}\par\smallskip
\begin{tikzpicture}[x=0.60cm,y=0.22cm,font=\scriptsize]
\path[use as bounding box] (-1.3,-7.2) rectangle (5.35,10);
\draw[black!25,line width=0.4pt] (0,-4.4)--(0,9);
\draw[black!25,line width=0.4pt] (0,-5)--(5.2,-5);
\draw[black!30,line width=0.45pt] (0,0)--(5.2,0);
\foreach \y in {-4,0,4,8} {\node[left,inner sep=3pt,text=black!70] at (0,\y) {$\y$};}
\foreach \x in {0,5} {\draw[black!40] (\x,-5)--(\x,-5.25);\node[below,inner sep=2pt,text=black!70] at (\x,-5.25) {$\x$};}
\node[text=black!70] at (2.5,-7) {Local step};
\draw[splitfinal,line width=1.1pt,line join=round] plot coordinates {(0,0) (1,4) (2,1) (3,5) (4,2) (5,6)};
\foreach \p in {(0,0),(1,4),(2,1),(3,5),(4,2),(5,6)} {\fill[splitfinal] \p circle[radius=1.2pt];}
\draw[splitfinal,line width=0.8pt,fill=white] (0,0) circle[radius=2.4pt];
\end{tikzpicture}\par
{\scriptsize Positive meander\\shifted by $-\eta$}\par\smallskip
{\footnotesize $M_{>}(t,x)=\exp L_+(t,x)$}
\end{minipage}\par\smallskip
\caption{Cutting a walk at its first and last visits to its global minimum $\eta$. In (a), $\eta=-4$, $i=6$ and $j=20$. In (b), the middle and final pieces are shifted vertically by $-\eta$ so that they start at zero. The initial piece ends at its first minimum; the middle piece is a weak excursion, with an interior return to zero; and the final piece is a positive meander. The colours identify the same pieces in both panels.}
\label{figminimumsplit}
\end{figure}

Conversely, one such initial piece, one weak excursion and one positive meander determine a unique unrestricted walk by concatenation, shifting the last two pieces to start at height $\eta$. The first and last minimum visits occur precisely at the joins. For given lengths and changes in height, every choice of an initial piece can be combined with every choice of a middle piece and every choice of a final piece. The number of combinations is therefore the product of the three counts. Concatenation adds their lengths and endpoint changes, so multiplying the three generating functions counts these combinations. Since the decomposition is unique, each unrestricted walk is counted exactly once. Each series has constant term one in $t$, and its nonempty paths have negative, zero or positive endpoint changes, respectively. By the uniqueness of the preceding factorization, these series are $\exp L_-$, $\exp L_0$ and $\exp L_+$, in that order. In particular,
\begin{equation}\label{eqpositivefactor}
 E(t^h)=\exp L_0(t),\qquad
 M_{>}(t,x)=\exp L_+(t,x).
\end{equation}
Here $E(t^h)$ records step length rather than excursion size, since a size-$n$ excursion has $hn$ steps.

Cutting instead at the first and last visits to the maximum gives the same three factors after reversing the appropriate pieces~\cite[Theorem~10]{BLW}. The choice between partitions of $q$ and partitions of $p$ is determined by the endpoint coefficient extracted, rather than by the choice of extremum.

Similarly, let $M_{\ge}(t,x)$ be the generating function for weak meanders, including the empty walk, with the same length and height variables. Cutting at the last visit to zero gives a weak excursion followed by a positive meander. Hence
\begin{equation}\label{eqweakfactor}
 \begin{aligned}
 M_{\ge}(t,x)&=E(t^h)M_{>}(t,x)=\exp(L_0+L_+),\\
 (1-tP(x))M_{\ge}(t,x)&=\exp(-L_-).
 \end{aligned}
\end{equation}
The last identity follows from $1-tP(x)=1/W(t,x)$ and the factorization of $W$.

For $r\ge1$, we write $L_r^{\uparrow}(t)$ and $L_r^{\downarrow}(t)$ for the logarithmic endpoint series at $+r$ and $-r$, respectively:
\begin{equation}\label{eqLr}
 \begin{aligned}
 L_r^{\uparrow}(t)&=[x^r]\log W(t,x)
 =\sum_{K\ge1}\frac{t^K}{K}[x^r]P(x)^K,\\
 L_r^{\downarrow}(t)&=[x^{-r}]\log W(t,x)
 =\sum_{K\ge1}\frac{t^K}{K}[x^{-r}]P(x)^K.
 \end{aligned}
\end{equation}
The arrows indicate the sign of the endpoint. Intermediate heights are unrestricted in both series: the coefficient of $t^K$ is the number of length-$K$ walks ending at the specified height, divided by $K$. In the preceding two-step example with jumps $+3,-2$, $[t^2]L_1^{\uparrow}=2/2=1$; both step orders contribute. Grouping by endpoint gives
\[
 L_+=\sum_{r\ge1}L_r^{\uparrow}(t)x^r,\qquad
 L_-=\sum_{r\ge1}L_r^{\downarrow}(t)x^{-r}.
\]

\begin{lemma}\label{lemendpoints}
Let $y$ record the absolute value of the endpoint height: $y^r$ corresponds to endpoint $+r$ in the first exponential and endpoint $-r$ in the second. The strict and weak counts satisfy
\begin{align}
 T_n^{(p,q)}&=[t^{hn-1}y^q]\exp\!\left(\sum_{r=1}^qL_r^{\uparrow}(t)y^r\right)
 \quad(n\ge1),\label{eqendstrict}\\
 E_n^{(p,q)}&=-[t^{hn+1}y^q]\exp\!\left(-\sum_{r=1}^qL_r^{\downarrow}(t)y^r\right)
 \quad(n\ge0).\label{eqendweak}
\end{align}
\end{lemma}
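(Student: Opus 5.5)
For the strict identity~\eqref{eqendstrict} I will use the factorization~\eqref{eqpositivefactor}, which gives $M_{>}(t,x)=\exp L_+(t,x)=\exp\bigl(\sum_{r\ge1}L_r^{\uparrow}(t)x^r\bigr)$. The plan is to take the coefficient of $x^q$ on both sides and then truncate.

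First, every nonconstant monomial in the exponential carries a positive power of $x$. Terms $L_r^{\uparrow}x^r$ with $r>q$ therefore cannot contribute to $[x^q]$, so the sum may be cut off at $r=q$. After that, renaming $x$ to $y$ is harmless: we are only tracking the total exponent, and all the exponents are positive.

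Second, I interpret $[t^{hn-1}x^q]M_{>}$ combinatorially. It counts positive meanders of length $hn-1$ that end at height $q$. Appending a $-q$ step gives a bijection with strict excursions of size $n$. The appended step lands at $0$, and every earlier height after the start is positive. Conversely, the last step of a strict excursion must be $-q$, because it arrives at $0$ from a positive height, and a $+p$ step cannot do that. This gives~\eqref{eqendstrict}.

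For the weak identity~\eqref{eqendweak} I will use the second line of~\eqref{eqweakfactor}, namely $(1-tP(x))M_{\ge}(t,x)=\exp(-L_-)$. The right side involves only nonpositive powers of $x$. As before, $[x^{-q}]\exp(-L_-)$ equals the $y^q$ coefficient of $\exp\bigl(-\sum_{r=1}^qL_r^{\downarrow}y^r\bigr)$, since parts $r>q$ cannot contribute.

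On the left, I expand $(1-t x^p-t x^{-q})M_{\ge}$. The series $M_{\ge}$ contains only nonnegative powers of $x$, so $M_{\ge}$ itself and $t x^pM_{\ge}$ have no $x^{-q}$ term. The term $-t x^{-q}M_{\ge}$ contributes $-t\,[x^0]M_{\ge}$ to the $x^{-q}$ coefficient. A weak meander ending at $0$ is a weak excursion, so $[x^0]M_{\ge}=E(t^h)$.

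Extracting $t^{hn+1}$ then gives
\[
[t^{hn+1}x^{-q}]\exp(-L_-)=-[t^{hn}]E(t^h)=-E_n^{(p,q)}.
\]
Negating both sides yields~\eqref{eqendweak}.

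The only delicate point is justifying the truncation at $r\le q$ and the substitution of $y$. I expect this to be routine. In $\Q[x,x^{-1}][[t]]$, each monomial in the exponential is a product of terms $x^{\pm r_i}$ whose exponents all have the same sign. Their absolute values must sum to $q$, so each $r_i$ is at most $q$. Equivalently, the map $x^{\pm r}\mapsto y^r$ is a ring homomorphism on the relevant one-signed subring, and it commutes with $\exp$.

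I also need to check the boundary cases. For $n=0$ the weak formula picks out $[t^1x^{-q}]$, which is $-t\cdot 1$ from the empty weak meander, so it correctly gives $E_0=1$.
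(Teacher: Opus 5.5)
Your proposal is correct and follows the paper's proof essentially step for step. For the strict identity, you identify strict excursions with positive meanders of length $hn-1$ ending at $q$ and use $M_{>}=\exp L_+$; for the weak identity, you extract $[x^{-q}]$ from $(1-tP(x))M_{\ge}=\exp(-L_-)$, where only $-tx^{-q}[x^0]M_{\ge}=-tE(t^h)$ contributes, and you justify the truncation at $r\le q$ and the substitution of $y$ by the one-signedness of the exponents, as the paper does.
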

\begin{proof}
A nonempty strict excursion must end with a downward step $-q$: a final upward step would start at height $-p$, contrary to positivity. Its penultimate height is therefore $q$. Deleting the final step gives a positive meander of length $hn-1$ ending at $q$. Conversely, appending a $-q$ step to any such meander gives a strict excursion of length $hn$. Using~\eqref{eqpositivefactor}, we obtain
\[
 T_n^{(p,q)}=[t^{hn-1}x^q]M_{>}(t,x)
 =[t^{hn-1}x^q]\exp L_+(t,x).
\]
Replacing $x$ by $y$ gives the identity for $T_n^{(p,q)}$ in~\eqref{eqendstrict}.

For $E_n^{(p,q)}$, use~\eqref{eqweakfactor}:
\[
 (1-tx^p-tx^{-q})M_{\ge}(t,x)=\exp(-L_-(t,x)).
\]
Every weak meander ends at a nonnegative height, so $M_{\ge}(t,x)$ contains only terms $t^K x^r$ with $r\ge0$. In particular, the terms $M_{\ge}$ and $-tx^pM_{\ge}$ cannot contribute to the coefficient of $x^{-q}$. In the remaining term, multiplication by $-tx^{-q}$ changes $t^K x^r$ to $-t^{K+1}x^{r-q}$. The exponent of $x$ is $-q$ exactly when $r=0$. Thus only the coefficient of $x^0$ in $M_{\ge}$ contributes. Since a weak meander ending at zero is a weak excursion, this coefficient is $E(t^h)$. Hence
\[
 [x^{-q}]\exp(-L_-(t,x))=-tE(t^h).
\]
Taking the coefficient of $t^{hn+1}$ gives $-E_n^{(p,q)}$, so
\[
 E_n^{(p,q)}=-[t^{hn+1}x^{-q}]\exp(-L_-(t,x)).
\]
This explains both the leading minus sign and the extra power of $t$. Substituting $y=x^{-1}$ gives the identity for $E_n^{(p,q)}$ in~\eqref{eqendweak}.

In both exponentials, terms with $r>q$ cannot contribute to the coefficient of $y^q$: multiplying powers of $y$ adds their exponents, and all the exponents being added are nonnegative.
\end{proof}

\subsection{Fuss--Catalan expressions for the \texorpdfstring{$L_r$}{Lr} series}
Fix $r\ge1$. For endpoint $-r$, let $u$ and $d$ be the numbers of upward and downward steps. The endpoint equation $pu-qd=-r$ gives
\[
 d=\frac{pu+r}{q},\qquad K=u+d=\frac{hu+r}{q}.
\]
The congruence $pu+r\equiv0\pmod q$ is precisely the condition that $d$ is an integer. For each allowed $u\ge0$, choosing the $u$ upward-step positions among the $K$ positions gives $\binom{K}{u}$ unrestricted walks. Their contribution to the logarithm is $\binom{K}{u}/K$. Since $K=(h/q)u+r/q$, the definition of the Fuss--Catalan coefficients gives
\[
 \frac1K\binom{K}{u}=\frac qr C_{h/q,r/q}(u).
\]
For a fixed $u$, both $d$ and $K$ are determined. We obtain $L_r^{\downarrow}(t)$ by adding the contributions of these different step counts:
\begin{equation}\label{eqLdownFC}
 L_r^{\downarrow}(t)
 =\frac qr\sum_{\substack{u\ge0\\pu+r\equiv0\pmod q}}
 C_{h/q,r/q}(u)\,t^{(hu+r)/q}.
\end{equation}

For endpoint $+r$, writing the length in terms of $u$ gives $K=(h/q)u-r/q$. Comparing this with $\alpha u+\beta$ would give $\beta=-r/q$. If one uses $d$, the number of downward steps, one obtains
\[
 u=\frac{qd+r}{p},\qquad K=u+d=\frac{hd+r}{p}.
\]
Now $K=(h/p)d+r/p$, giving the positive second parameter $r/p$. The congruence $qd+r\equiv0\pmod p$ ensures that $u$ is an integer. Choosing the $d$ downward-step positions gives the weighted count
\[
 \frac1K\binom{K}{d}=\frac pr C_{h/p,r/p}(d).
\]
Therefore
\begin{equation}\label{eqLupFC}
 L_r^{\uparrow}(t)
 =\frac pr\sum_{\substack{d\ge0\\qd+r\equiv0\pmod p}}
 C_{h/p,r/p}(d)\,t^{(hd+r)/p}.
\end{equation}
In both sums, the exponent of $t$ is $K$, the total number of steps. Each endpoint series is a sum over the possible step counts for one final height. In Subsection~\ref{subsecpartitionexpansion}, we substitute these series into the exponentials of Lemma~\ref{lemendpoints}. A partition of $q$ selects a product of endpoint series, and multiplying their sums gives a convolution over their step counts.

Lagrange inversion verifies the coefficient identity in~\eqref{eqfcintro}. Set $G(x)=F_\alpha(x)-1$. Then $G=x(1+G)^\alpha$, and for $m\ge1$,
\[
 [x^m]F_\alpha(x)^\beta
 =\frac{\beta}{m}[w^{m-1}](1+w)^{\alpha m+\beta-1}
 =\frac{\beta}{\alpha m+\beta}\binom{\alpha m+\beta}{m}.
\]
The constant coefficient is one. When $\alpha$ and $\beta$ are rational, we expand $(1+w)^{\alpha m+\beta-1}$ using the generalized binomial theorem. Its coefficient of $w^{m-1}$ is still the binomial coefficient displayed above~\cite{Gessel2016}.

\subsection{Completing the partition expansion}\label{subsecpartitionexpansion}
We now substitute the endpoint series into the exponentials of Lemma~\ref{lemendpoints}. Expanding these exponentials produces products of series for endpoint heights that add to $q$.
For arbitrary series $B_1,\ldots,B_q$ and $\varepsilon\in\{+1,-1\}$, expand the exponential as
\[
 \exp\!\left(\varepsilon\sum_{r=1}^qB_ry^r\right)
 =\prod_{r=1}^q\left(\sum_{k\ge0}\frac{\varepsilon^kB_r^k y^{rk}}{k!}\right).
\]
Choosing $m_r(\lambda)$ copies of $B_ry^r$ from the $r$th factor contributes exponent $r m_r(\lambda)$ and denominator $m_r(\lambda)!$. To obtain $y^q$, these exponents must sum to $q$; the choices therefore correspond to partitions $\lambda\vdash q$. There are $\ell$ copies in total, giving the sign $\varepsilon^\ell$. Hence
\begin{equation}\label{eqexponentialpartitions}
 [y^q]\exp\!\left(\varepsilon\sum_{r=1}^qB_ry^r\right)
 =\sum_{\lambda\vdash q}\frac{\varepsilon^{\ell}}{\prod_r m_r(\lambda)!}
 \prod_{j=1}^{\ell}B_{\lambda_j}.
\end{equation}
For example, when $q=3$, this coefficient is
\[
 \varepsilon B_3+\varepsilon^2 B_2B_1+\frac{\varepsilon^3}{3!}B_1^3,
\]
corresponding to $(3)$, $(2,1)$ and $(1,1,1)$.

For $E_n^{(p,q)}$, insert~\eqref{eqLdownFC} into~\eqref{eqendweak}. For a fixed partition, multiplying the $\ell$ endpoint series adds their step lengths:
\[
 \sum_j\frac{h u_j+\lambda_j}{q}
 =\frac hq\sum_ju_j+1,
\]
because $\sum_j\lambda_j=q$. The extracted length is $hn+1$, so $\sum_j u_j=qn$. Each selected index retains its congruence $pu_j+\lambda_j\equiv0\pmod q$. Multiplying the factors $q/\lambda_j$ gives $q^\ell/\prod_j\lambda_j$; the multiplicity factorials turn this into $q^\ell/z_\lambda$. Here $\varepsilon=-1$, and the leading minus sign in the identity for $E_n^{(p,q)}$ gives $-(-1)^\ell=(-1)^{\ell-1}$. This proves~\eqref{eqmainweak}.

For $T_n^{(p,q)}$, insert~\eqref{eqLupFC} into~\eqref{eqendstrict}. The combined length is
\[
 \sum_j\frac{h d_j+\lambda_j}{p}
 =\frac hp\sum_jd_j+\frac qp.
\]
Since $h=p+q$, requiring length $hn-1$ is equivalent to $\sum_jd_j=pn-1$. Each selected index satisfies $qd_j+\lambda_j\equiv0\pmod p$. Every endpoint series supplies one multiplier $p/\lambda_j$, so the $\ell$ factors give
\[
 \frac{1}{\prod_r m_r(\lambda)!}\prod_{j=1}^{\ell}\frac p{\lambda_j}
 =\frac{p^\ell}{(\prod_j\lambda_j)(\prod_r m_r(\lambda)!)}
 =\frac{p^\ell}{z_\lambda}.
\]
Here $\varepsilon=1$, so all signs are positive. Every selected step length in both calculations is positive, since the step counts are nonnegative and the partition parts are positive. Thus~\eqref{eqmainstrict} follows, completing the proof of Theorem~\ref{thmmain}.

\subsection{Rewriting the sums}\label{subsecreindexing}
The congruences in Theorem~\ref{thmmain} select indices with regular spacing. We write the number $u$ of upward steps as a multiple of $q$ plus its remainder. For jumps $+4$ and $-3$ and partition part $1$, the possible numbers of upward steps are $2,5,8,\ldots$. Writing
\[
 u=3v+2,\qquad v=0,1,2,\ldots,
\]
lists these values exactly once and incorporates the divisibility condition into the parametrization. For $\lambda=(1,1,1)$ in the formula for $E_n^{(4,3)}$, the condition $u_1+u_2+u_3=3n$ becomes
\[
 (3v_1+2)+(3v_2+2)+(3v_3+2)=3n,
 \qquad v_1+v_2+v_3=n-2.
\]
Each of the three indices includes the fixed remainder $2$, so the remainders contribute $6$ to the total $3n$. Subtracting $6$ and dividing by $3$ gives $v_1+v_2+v_3=n-2$. We now give the corresponding remainders and index totals for both formulas.

Write $r$ for a partition part, so $1\le r\le q$. We describe both endpoint types using upward-step counts, with $\sigma=+1$ for the factors in the formula for $E_n^{(p,q)}$ and $\sigma=-1$ for those in the formula for $T_n^{(p,q)}$:
\begin{center}
\begin{tabular}{cccc}
\toprule
$\sigma$ & Count formula & Endpoint & Step-count equation\\
\midrule
$+1$ & $E_n^{(p,q)}$ & $-r$ & $pu+r=qd$\\
$-1$ & $T_n^{(p,q)}$ & $+r$ & $pu-r=qd$\\
\bottomrule
\end{tabular}
\end{center}
The superscript follows the sign of $r$ in the congruence; the endpoint has the opposite sign. Define
\begin{equation}\label{eqresidues}
\begin{gathered}
 0\le k_r^\sigma<q,\qquad pk_r^\sigma+\sigma r\equiv0\pmod q,\\
 a_r^\sigma=\frac{hk_r^\sigma+\sigma r}{q},\qquad
 \delta_\lambda^\sigma=\frac1q\sum_{j=1}^{\ell}k_{\lambda_j}^\sigma.
\end{gathered}
\end{equation}
Coprimality of $p$ and $q$ makes $k_r^\sigma$ unique. This number is the remainder when the number $u$ of upward steps is divided by $q$. Every nonnegative $u$ satisfying the congruence can therefore be written as
\[
 u=qv+k_r^\sigma,\qquad v=0,1,2,\ldots.
\]
The remainder $k_r^\sigma$ is the same for all values of $u$ satisfying the congruence; only the quotient $v$ changes. In the example with $p=4$, $q=3$ and final height $-1$, the values $2,5,8,\ldots$ all leave remainder $2$ upon division by $3$. Thus $k_1^+=2$ and $u=3v+2$.

Substituting this expression for $u$ into the length formula gives
\[
 K=hv+a_r^\sigma.
\]
Here $a_r^\sigma$ is the number added to $hv$ to obtain the total length. For the same example, $d=4v+3$ and $K=u+d=7v+5$, so $a_1^+=5$. The quantity $\delta_\lambda^\sigma$ gives the shift in the sum of the new indices caused by their remainders. For instance,
\[
 \delta_{(1,1,1)}^+=\frac{2+2+2}{3}=2,
\]
which accounts for the total $n-2$ in the preceding example. When $q=1$, both residues are zero.

In the formula for $T_n^{(p,q)}$, a coefficient has the form $C_{h/p,r/p}(d)$, where $d$ is the number of downward steps. To express this coefficient in terms of $v$, we substitute $u=qv+k_r^-$ into $d=(pu-r)/q$. This gives $d=pv+b_r$, where
\begin{equation}\label{eqbminus}
 b_r=\frac{p k_r^- -r}{q}=a_r^- -k_r^-.
\end{equation}
The coefficient is therefore evaluated as $C_{h/p,r/p}(pv+b_r)$. For example, with $p=4$, $q=3$ and endpoint $+1$, one has $k_1^-=1$, so
\[
 u=3v+1,\qquad d=4v+1.
\]
Thus $b_1=1$, and the corresponding coefficients are $C_{7/4,1/4}(1)$, $C_{7/4,1/4}(5)$, $C_{7/4,1/4}(9)$, and so on.

For $r<q$, one has $b_r\ge0$, so every $v\ge0$ gives nonnegative step counts. For $r=q$, one has $k_q^-=0$ and $b_q=-1$, so $d=pv-1$ requires $v\ge1$. This part occurs only in the partition $(q)$.

\begin{corollary}\label{corshifted}
The shifts $\delta_\lambda^\pm$ are nonnegative integers. Formulas~\eqref{eqmainweak} and~\eqref{eqmainstrict} are equivalently
\begin{align}
 E_n^{(p,q)}&=\sum_{\lambda\vdash q}\frac{(-1)^{\ell-1}q^\ell}{z_\lambda}
 \sum_{v_1+\cdots+v_\ell=n-\delta_\lambda^+}
 \prod_{j=1}^{\ell}C_{h/q,\,\lambda_j/q}(qv_j+k_{\lambda_j}^+),\label{eqshiftedfcweak}\\
 T_n^{(p,q)}&=\sum_{\lambda\vdash q}\frac{p^\ell}{z_\lambda}
 \sum_{v_1+\cdots+v_\ell=n-\delta_\lambda^-}
 \prod_{j=1}^{\ell}C_{h/p,\,\lambda_j/p}(pv_j+b_{\lambda_j}),\label{eqshiftedfcstrict}
\end{align}
where the strict identity is for $n\ge1$. In the one-part strict term, $v_1=n\ge1$, so its Fuss--Catalan index $pn-1$ is nonnegative. In ordinary-binomial form these become
\begin{align}
 E_n^{(p,q)}&=\sum_{\lambda\vdash q}\frac{(-1)^{\ell-1}}{\prod_r m_r(\lambda)!}
 \sum_{\sum_jv_j=n-\delta_\lambda^+}
 \prod_{j=1}^{\ell}\frac{\binom{hv_j+a_{\lambda_j}^+}{qv_j+k_{\lambda_j}^+}}
                              {hv_j+a_{\lambda_j}^+},\label{eqbinomialweak}\\
 T_n^{(p,q)}&=\sum_{\lambda\vdash q}\frac{1}{\prod_r m_r(\lambda)!}
 \sum_{\sum_jv_j=n-\delta_\lambda^-}
 \prod_{j=1}^{\ell}\frac{\binom{hv_j+a_{\lambda_j}^-}{qv_j+k_{\lambda_j}^-}}
                              {hv_j+a_{\lambda_j}^-}.\label{eqbinomialstrict}
\end{align}
\end{corollary}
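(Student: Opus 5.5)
The plan is to reparametrize the index sets of Theorem~\ref{thmmain} using the residues in~\eqref{eqresidues}, check that the shifts are integers, and then rewrite each Fuss--Catalan factor as a binomial coefficient divided by a length.

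\textbf{Integrality of the shifts.} For the weak formula, sum the defining congruences $pk_{\lambda_j}^++\lambda_j\equiv0\pmod q$ over $j$. Since $\sum_j\lambda_j=q$, this gives $p\sum_jk_{\lambda_j}^+\equiv0\pmod q$. Coprimality of $p$ and $q$ then forces $q\mid\sum_jk_{\lambda_j}^+$, so $\delta_\lambda^+$ is a nonnegative integer. The same argument with $-\lambda_j$ handles $\delta_\lambda^-$.

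\textbf{Weak formula.} By uniqueness of $k_r^+$, every admissible $u_j$ in~\eqref{eqmainweak} is written uniquely as $u_j=qv_j+k_{\lambda_j}^+$ with $v_j\ge0$, and conversely every such value is admissible. The constraint $\sum u_j=qn$ becomes $\sum v_j=n-\delta_\lambda^+$. This reindexing gives~\eqref{eqshiftedfcweak}.

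\textbf{Strict formula.} The admissible $d$ are exactly those with $u=(qd+r)/p$ a nonnegative integer. Reduced modulo $q$, this $u$ is forced to equal $k_r^-$, so $u=qv+k_r^-$ and $d=pv+b_r$ as in~\eqref{eqbminus}. The point needing care is the range of $v$.
\begin{itemize}
\item For $r<q$ we have $k_r^->0$. Then $b_r=(pk_r^--r)/q$ satisfies $b_r>-1$, because $pk_r^-\ge1>r-q$. Hence $b_r\ge0$, and the correspondence $d\leftrightarrow v\ge0$ is a bijection.
\item For $r=q$ (only in $\lambda=(q)$) we have $k_q^-=0$ and $b_q=-1$, so $d\ge0$ corresponds to $v\ge1$.
\end{itemize}
In the partition $(q)$ the constraint reads $v_1=n-\delta_{(q)}^-=n$, and $n\ge1$ lies in range automatically, so no spurious terms enter. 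For general $\lambda$, the constraint $\sum d_j=pn-1$ translates to
\[
p\sum_j v_j+\sum_j b_{\lambda_j}=pn-1 .
\]
Using $\sum_j b_{\lambda_j}=(p\sum_j k_{\lambda_j}^--q)/q=p\delta_\lambda^--1$, this becomes $\sum_jv_j=n-\delta_\lambda^-$. This gives~\eqref{eqshiftedfcstrict}.

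\textbf{Binomial forms.} Use the identities stated before Section~\ref{secproof}:
\[
C_{h/q,r/q}(u)=\frac{r}{qK}\binom{K}{u},\qquad C_{h/p,r/p}(d)=\frac{r}{pK}\binom{K}{d}.
\]
In both cases $K=hv+a_r^\sigma$. The factors $r$ cancel $\prod_j\lambda_j$ in $z_\lambda$, and the powers $q^\ell$ or $p^\ell$ cancel. This leaves $1/\prod_r m_r(\lambda)!$.

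For the strict case, $\binom{K}{d}=\binom{K}{u}$ with $u=qv+k_r^-$, which matches~\eqref{eqbinomialstrict}. The exceptional part $r=q$ with $v\ge1$ still gives $K=hv-1\ge1$, so the form is valid there.

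The main obstacle is the bookkeeping on the strict side: proving $b_r\ge0$ for $r<q$, and confirming that the single part $r=q$ introduces no stray $v=0$ term.
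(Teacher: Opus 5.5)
Your proposal is correct and follows essentially the same route as the paper. You reparametrize by the residues $k_r^\pm$, sum the congruences to get integrality of $\delta_\lambda^\pm$, treat the exceptional part $r=q$ (with $b_q=-1$ and $v\ge1$) separately, and cancel the factors $\lambda_j$ and $q^\ell$ or $p^\ell$ against $z_\lambda$. The only cosmetic difference is that you compute $\sum_j b_{\lambda_j}=p\delta_\lambda^- -1$ directly from the definition of $b_r$, whereas the paper obtains it from $\sum_j a_{\lambda_j}^\sigma=h\delta_\lambda^\sigma+\sigma$.
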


\begin{proof}
Summing the congruences that define the remainders, and using $\sum_j\lambda_j=q$, gives
\[
 p\sum_j k_{\lambda_j}^\sigma
 \equiv-\sigma\sum_j\lambda_j=-\sigma q\equiv0\pmod q.
\]
Since $p$ and $q$ are coprime, $\sum_j k_{\lambda_j}^\sigma$ is divisible by $q$. Thus $\delta_\lambda^\sigma$ is an integer; it is nonnegative because every remainder is nonnegative. The length constants satisfy
\begin{equation}\label{eqsumoffsets}
 \sum_j a_{\lambda_j}^\sigma
 =\frac{h\sum_j k_{\lambda_j}^\sigma+\sigma q}{q}
 =h\delta_\lambda^\sigma+\sigma.
\end{equation}

In the formula for $E_n^{(p,q)}$, substituting $u_j=qv_j+k_{\lambda_j}^+$ gives
\[
 \sum_j u_j=q\sum_jv_j+q\delta_\lambda^+.
\]
The condition $\sum_j u_j=qn$ is therefore equivalent to $\sum_jv_j=n-\delta_\lambda^+$. Substitution into each Fuss--Catalan coefficient gives the first shifted formula.

For endpoint $+r$, every nonnegative step-count pair has the unique representation
\[
 u=qv+k_r^-,\qquad d=pv+b_r.
\]
If $r<q$, then $pk_r^- -r$ is a multiple of $q$ greater than $-q$, so $b_r\ge0$ and every $v\ge0$ gives nonnegative counts. If $r=q$, then $k_q^-=0$ and $b_q=-1$, so $d\ge0$ requires $v\ge1$. This part occurs only in $\lambda=(q)$; here $pv-1=pn-1$ forces $v=n\ge1$.

Using $b_r=a_r^- -k_r^-$ and the relation for the length constants, we obtain
\[
 \sum_j b_{\lambda_j}
 =(h\delta_\lambda^- -1)-q\delta_\lambda^-
 =p\delta_\lambda^- -1.
\]
Hence the downward-step total in the formula for $T_n^{(p,q)}$ becomes
\[
 \sum_j d_j=p\sum_jv_j+p\delta_\lambda^- -1.
\]
Equating this to $pn-1$ gives $\sum_jv_j=n-\delta_\lambda^-$, proving the second shifted formula.

For allowed indices, substituting the definition of the Fuss--Catalan coefficients gives
\begin{align}
 \frac{q}{r}C_{h/q,r/q}(qv+k_r^+)
 &=\frac{1}{hv+a_r^+}\binom{hv+a_r^+}{qv+k_r^+},\label{eqfactorweak}\\
 \frac{p}{r}C_{h/p,r/p}(pv+b_r)
 &=\frac{1}{hv+a_r^-}\binom{hv+a_r^-}{pv+b_r}
 =\frac{1}{hv+a_r^-}\binom{hv+a_r^-}{qv+k_r^-}.\label{eqfactorstrict}
\end{align}
The last equality holds because the total length is the sum of the upward- and downward-step counts. Choosing the positions of the downward steps is equivalent to choosing the remaining positions for the upward steps.

Finally, $z_\lambda=(\prod_j\lambda_j)\prod_r m_r(\lambda)!$, so the coefficient multipliers combine with the outer weights as
\[
 \frac{q^\ell}{z_\lambda}\prod_{j=1}^{\ell}\frac{\lambda_j}{q}
 =\frac{p^\ell}{z_\lambda}\prod_{j=1}^{\ell}\frac{\lambda_j}{p}
 =\frac{1}{\prod_r m_r(\lambda)!}.
\]
The sign $(-1)^{\ell-1}$ remains in the formula for $E_n^{(p,q)}$. Substituting the binomial expressions therefore gives the two ordinary-binomial forms.
\end{proof}

\section{The product formulas}\label{secstructure}
\subsection{Integer multiples of Fuss--Catalan coefficients}

\begin{proposition}\label{propinteger}
Let $1\le r\le q$, and let $u,d$ be nonnegative upward- and downward-step counts with $|pu-qd|=r$. Put $K=u+d$. Then
\begin{equation}\label{eqintegerfactors}
 \frac rK\binom{K}{u}\in\Z_{>0}.
\end{equation}
For endpoint $-r$, this quantity equals $qC_{h/q,r/q}(u)$, as in~\eqref{eqmainweak}; for endpoint $+r$, it equals $pC_{h/p,r/p}(d)$, as in~\eqref{eqmainstrict}.
\end{proposition}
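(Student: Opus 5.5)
The identification with Fuss--Catalan coefficients is immediate from Section~\ref{secproof}. For endpoint $-r$ we have $K=(hu+r)/q$, so \eqref{eqfcintro} gives $qC_{h/q,r/q}(u)=\frac rK\binom Ku$. For endpoint $+r$ we have $K=(hd+r)/p$, so $pC_{h/p,r/p}(d)=\frac rK\binom Kd=\frac rK\binom Ku$. Positivity is also clear. Since $r\ge1$, the walk is nonempty, so $K\ge1$ and $0\le u\le K$, and the binomial coefficient is positive. The real content is therefore integrality.

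The plan is a cycle-lemma argument. Put $g=\gcd(K,u)$. Since $\frac rK\binom Ku=\frac{r}{K}\cdot\frac{K}{u}\binom{K-1}{u-1}$ when $u\ge1$, one route is Bézout. Write $\gcd(K,u)=aK+bu$. Then
\[
\frac{g}{K}\binom Ku=a\binom Ku+b\binom{K-1}{u-1}\in\Z .
\]
So it suffices to show that $K/g$ divides $r\cdot\frac{K}{g}\cdot\frac gK\cdot$(integer). More simply, it suffices to show $K\mid r\binom Ku$. Because $\frac gK\binom Ku$ is an integer, this reduces to showing $g\mid r$.

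For $g\mid r$, note that $g$ divides $u$ and $K$, hence divides $d=K-u$. Therefore $g$ divides $pu-qd=\pm r$. This gives integrality. The case $u=0$ needs separate handling. There $d=K$ and $qK=r$, so $\frac rK\binom K0=r/K=q$, which is an integer. The case $d=0$ is symmetric, giving $pK=r$ and the value $p$.

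As an alternative, one could give a combinatorial proof by rotating cyclic shifts. The $K$ rotations of a walk ending at $-r$ contain exactly $r$ whose first visit to $-r$... but the Bézout argument is shorter. The only point needing care is the Bézout identity step.
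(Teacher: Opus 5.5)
Your proof is correct, but it reaches integrality by a detour that the paper avoids. You first prove the general fact that $\frac{g}{K}\binom{K}{u}\in\mathbb{Z}$ for $g=\gcd(K,u)$, using B\'ezout and $\frac{u}{K}\binom{K}{u}=\binom{K-1}{u-1}$. You then observe that $g\mid \pm(pu-qd)=r$.

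The paper instead notices that $r$ is already an explicit integer combination of $u$ and $d$, so no gcd is needed. Using $\frac uK\binom Ku=\binom{K-1}{u-1}$ and $\frac dK\binom Ku=\binom{K-1}{u}$, it writes directly
\[
\frac rK\binom Ku=\pm\Bigl(p\binom{K-1}{u-1}-q\binom{K-1}{u}\Bigr),
\]
with the sign $+$ for endpoint $+r$ and $-$ for endpoint $-r$. This gives integrality in one line and produces an explicit difference of binomial coefficients for the integer. It also covers $u=0$ or $d=0$ automatically, with out-of-range binomials read as zero.

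Your route buys a slightly more general statement: $\frac{s}{K}\binom Ku$ is an integer for every multiple $s$ of $\gcd(K,u)$. The cost is an extra step. Your separate treatment of $u=0$ is also unnecessary, since B\'ezout with $\gcd(K,0)=K$, $a=1$, $b=0$ already works there.

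When writing it up, drop the garbled sentence beginning ``So it suffices to show that $K/g$ divides\ldots'' and the unfinished cycle-lemma remark. The clean chain is $\frac rK\binom Ku=\frac rg\cdot\frac gK\binom Ku$, with both factors integers.
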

\begin{proof}
For endpoint $+r$, the step counts satisfy $pu-qd=r$, so
\[
 \frac rK\binom{K}{u}
 =\frac{pu-qd}{K}\binom{K}{u}
 =p\binom{K-1}{u-1}-q\binom{K-1}{u}.
\]
For endpoint $-r$, they satisfy $qd-pu=r$, giving
\[
 \frac rK\binom{K}{u}
 =\frac{qd-pu}{K}\binom{K}{u}
 =q\binom{K-1}{u}-p\binom{K-1}{u-1}.
\]
The right-hand sides are integers, with out-of-range binomial coefficients interpreted as zero. The left-hand sides are strictly positive because $r>0$, $K>0$ and $0\le u\le K$.
\end{proof}

The numerators $q^\ell\prod_j C_{h/q,\lambda_j/q}(u_j)$ and $p^\ell\prod_j C_{h/p,\lambda_j/p}(d_j)$ in Theorem~\ref{thmmain} are therefore products of positive integers. The integer factors have a combinatorial interpretation through marked positive meanders. For a nonempty positive meander $\pi$, let $\mu(\pi)$ be its smallest height after the start. A marked version is a pair $(\pi,j)$ with $j\in\{1,\ldots,\mu(\pi)\}$, so $\pi$ has $\mu(\pi)$ marked versions. Banderier, Lackner, and Wallner~\cite[Definition~18, Table~2 and Theorem~19]{BLW} show that $rL_r^{\uparrow}(t)$ counts marked positive meanders ending at $r$; $rL_r^{\downarrow}(t)$ has the corresponding interpretation for the reflected jumps $+q,-p$.

For jumps $+4,-3$, the two positive meanders of length four ending at height two have height sequences
\[
 (0,4,8,5,2),\qquad (0,4,1,5,2).
\]
Their smallest heights after the start are two and one, so they give $2+1=3$ marked meanders. This agrees with
\[
 [t^4]\,2L_2^{\uparrow}(t)=\frac24\binom42=3.
\]
The remaining weight $1/z_\lambda$ is rational, so a full partition contribution need not be an integer.

To express the product formulas as convolutions of integer sequences, put
\begin{align}
 A_r(v)&=\frac{r}{hv+a_r^+}\binom{hv+a_r^+}{qv+k_r^+}
       =qC_{h/q,r/q}(qv+k_r^+),\qquad v\ge0,\label{eqintegerA}\\
 D_r(v)&=\frac{r}{hv+a_r^-}\binom{hv+a_r^-}{qv+k_r^-}
       =pC_{h/p,r/p}(pv+b_r),\label{eqintegerD}
\end{align}
where the second formula holds for $v\ge0$ when $r<q$ and for $v\ge1$ when $r=q$. Set $D_q(0)=0$. Define
\[
 \mathcal A_r(z)=\sum_{v\ge0}A_r(v)z^v,\qquad
 \mathcal D_r(z)=\sum_{v\ge0}D_r(v)z^v.
\]
These series have integer coefficients, positive except for $D_q(0)$. Corollary~\ref{corshifted} becomes
\begin{align}
 E(z)&=\sum_{\lambda\vdash q}\frac{(-1)^{\ell-1}z^{\delta_\lambda^+}}{z_\lambda}
                  \prod_{j=1}^{\ell}\mathcal A_{\lambda_j}(z),\label{eqintegerseriesweak}\\
 T(z)&=\sum_{\lambda\vdash q}\frac{z^{\delta_\lambda^-}}{z_\lambda}
                  \prod_{j=1}^{\ell}\mathcal D_{\lambda_j}(z).\label{eqintegerseriesstrict}
\end{align}
The convention $D_q(0)=0$ allows every series $\mathcal D_r(z)$ to start at $v=0$, while excluding $d=-1$ when $r=q$. Each partition contribution is a product of integer sequence entries multiplied by the rational weight $1/z_\lambda$, with the additional sign $(-1)^{\ell-1}$ in the weak formula. An individual contribution may therefore be fractional, although the total sum is the integer excursion count.

\section{Explicit identities for excursion sequences}\label{secexamples}
When the downward jump has magnitude one ($q=1$), the product formulas recover Fuss--Catalan and Raney sequences. Appendix~\ref{appformulas} gives expanded binomial identities for all pairs $+6,-q$ with $1\le q<6$ and $+7,-q$ with $1\le q<7$. The identities for $E_n$ hold for $n\ge0$ and those for $T_n$ for $n\ge1$.

\subsection{Fuss--Catalan and Raney sequences for jumps \texorpdfstring{$+p$ and $-1$}{+p and -1}}
When $q=1$, the jumps are $+p$ and $-1$, and the binomial formulas reduce to
\begin{align}
 E_n^{(p,1)}&=\frac{1}{pn+1}\binom{(p+1)n}{n}\quad(n\ge0),\label{equnitweak}\\
 T_n^{(p,1)}&=\frac{p}{(p+1)n-1}\binom{(p+1)n-1}{n-1}\quad(n\ge1).\label{equnitstrict}
\end{align}
The weak counts are the usual Fuss--Catalan numbers, and the strict counts are shifted Raney numbers:
\[
 E_n^{(p,1)}=C_{p+1,1}(n),\qquad
 T_n^{(p,1)}=C_{p+1,p}(n-1)\quad(n\ge1).
\]
The generating functions satisfy $E=1+zE^{p+1}$ and $T=zE^p$. In particular,
\[
 T_n^{(p,1)}=[z^{n-1}]E(z)^p\quad(n\ge1),
\]
which explains the index $n-1$ in the Raney expression. The strict binomial formula can also be written as
\[
 T_n^{(p,1)}=\frac{1}{(p+1)n-1}\binom{(p+1)n-1}{n}
 =\frac{p}{(p+1)n-1}\binom{(p+1)n-1}{n-1}.
\]
For $p=2,3,6$, the weak sequences are \seqnum{A001764}, \seqnum{A002293}, and \seqnum{A002296}. Their strict companions are \seqnum{A006013} at index $n-1$, and \seqnum{A006632} and \seqnum{A130565} at index $n$, respectively~\cite{OEIS}. The last two OEIS entries start at index one. After normalization, original jumps $(6,3)$ and $(6,2)$ give the same counts as $(2,1)$ and $(3,1)$, respectively.

\section{Sequence identification}\label{secverification}
\subsection{OEIS correspondences and indexing}
Table~\ref{taboeis} identifies the excursion sequences in the OEIS~\cite{OEIS}, using each entry's own index. Some entries use the exchanged jumps $+q,-p$; the symmetry discussed after Theorem~\ref{thmmain} preserves both weak and strict excursion counts. In all tables, size $n$ is defined using the normalized jumps, as in the introduction.

\begin{table}[htbp]
\centering
\begin{tabular}{@{}llll@{}}
\toprule
Original jumps & Normalized pair & Weak $E_n$ & Strict $T_n$, $n\ge1$\\
\midrule
$+4,-3$ & $(4,3)$ & \seqnum{A300390}$(n)$ & ---\\
$+5,-2$ & $(5,2)$ & \seqnum{A300386}$(n)$ & ---\\
$+5,-3$ & $(5,3)$ & \seqnum{A300391}$(n)$ & ---\\
$+5,-4$ & $(5,4)$ & --- & ---\\
$+6,-1$ & $(6,1)$ & \seqnum{A002296}$(n)$ & \seqnum{A130565}$(n)$\\
$+6,-2$ & $(3,1)$ & \seqnum{A002293}$(n)$ & \seqnum{A006632}$(n)$\\
$+6,-3$ & $(2,1)$ & \seqnum{A001764}$(n)$ & \seqnum{A006013}$(n-1)$\\
$+6,-4$ & $(3,2)$ & \seqnum{A060941}$(n)$ & \seqnum{A293946}$(n)$\\
$+6,-5$ & $(6,5)$ & --- & ---\\
$+7,-1$ & $(7,1)$ & \seqnum{A007556}$(n)$ & \seqnum{A234466}$(n-1)$\\
$+7,-2$ & $(7,2)$ & \seqnum{A300387}$(n)$ & ---\\
$+7,-3$ & $(7,3)$ & --- & ---\\
$+7,-4$ & $(7,4)$ & --- & ---\\
$+7,-5$ & $(7,5)$ & --- & ---\\
$+7,-6$ & $(7,6)$ & --- & ---\\
\bottomrule
\end{tabular}
\caption{OEIS entries for the weak and strict excursion counts. The argument shown after each sequence number gives the corresponding OEIS index.}\label{taboeis}
\end{table}
These OEIS identifications agree with the entries' formulas or path interpretations, as well as their initial terms. For \seqnum{A293946}, the correspondence holds for $n\ge1$: its constant term is one, whereas $T_0=0$. For \seqnum{A006013} and \seqnum{A234466}, read the OEIS entry at index $n-1$ to obtain $T_n$. A dash means that no identification is asserted.

\section{A probability application}\label{secprobability}
Let the increments $X_j$ be independent, with $\Pr(X_j=p)=\theta$ and $\Pr(X_j=-q)=1-\theta$, where $0<\theta<1$. Put $S_0=0$ and $S_j=X_1+\cdots+X_j$. Every returning walk of length $hn$ has $qn$ upward and $pn$ downward steps. By independence, every such walk has the same probability $\theta^{qn}(1-\theta)^{pn}$. The order of its steps does not play a role. After multiplication by the weak and strict excursion counts, we obtain
\begin{align}
 \Pr\{S_{hn}=0,\ S_j\ge0\ (0\le j\le hn)\}
 &=E_n^{(p,q)}\theta^{qn}(1-\theta)^{pn}\quad(n\ge0),\label{eqprobweak}\\
 \Pr\{S_{hn}=0,\ S_j>0\ (1\le j<hn)\}
 &=T_n^{(p,q)}\theta^{qn}(1-\theta)^{pn}\quad(n\ge1).\label{eqprobstrict}
\end{align}
Choosing the $qn$ positions of the upward steps determines a returning walk, so there are $\binom{hn}{qn}$ such walks and
\[
 \Pr\{S_{hn}=0\}=\binom{hn}{qn}\theta^{qn}(1-\theta)^{pn}.
\]
Conditional on this return, all these walks are equally likely. The probabilities of being a weak or strict excursion are therefore $E_n^{(p,q)}/\binom{hn}{qn}$ for $n\ge0$ and $T_n^{(p,q)}/\binom{hn}{qn}$ for $n\ge1$, respectively. The probability $\theta^{qn}(1-\theta)^{pn}$ of each individual returning walk cancels in these ratios. Thus the conditional probabilities do not depend on $\theta$, whereas the return probability and the unconditioned excursion probabilities do for $n\ge1$.

Starting at height $p$, define the height process by $Y_0=p$ and $Y_j=p+S_j$, and put
\[
 \tau=\inf\{j\ge1:Y_j\le0\}.
\]
Thus $\tau$ is the first time the height becomes zero or negative. Every strict excursion starts with a $+p$ step. Removing it leaves a path starting at $p$ that stays positive until its final landing at zero; prepending the step reverses this correspondence. A strict excursion of size $n$ therefore gives a path with $hn-1$ steps, of which $qn-1$ are upward and $pn$ are downward. Writing $\Pr_p$ for probability with starting height $p$, we obtain
\begin{equation}\label{eqruin}
 \Pr_p\{\tau=hn-1,\ Y_\tau=0\}
 =T_n^{(p,q)}\theta^{qn-1}(1-\theta)^{pn}\quad(n\ge1).
\end{equation}
We need the strict excursion count here because a weak excursion may return to zero before its final step. When $q>1$, a walk can jump from a positive height to a negative height without first landing on zero. For example, with $p=4$ and $q=3$, the path $4\to1\to-2$ has $\tau=2$ and $Y_\tau=-2$. Formula~\eqref{eqruin} gives the probability that a walk starting at $p$ stays strictly positive until it lands at zero after $hn-1$ steps. It excludes walks that first enter the nonpositive region at a negative height.

Every first entry at or below zero ends with a $-q$ step from one of the heights $1,\ldots,q$. The terminal height is therefore $-s$ for some $s\in\{0,\ldots,q-1\}$. For $K\ge1$, let $N_{K,s}$ be the number of walks starting at $p$ that stay strictly positive before step $K$ and end at $-s$.

Deleting the final $-q$ step of such a walk leaves a path of length $K-1$ from $p$ to $q-s$. Prepending a $+p$ step from zero gives a positive meander of length $K$ ending at $q-s$. Conversely, every nonempty positive meander begins with $+p$; removing that step and appending $-q$ reverses the construction. Hence
\begin{equation}\label{eqfirstentrycounts}
 \begin{aligned}
 N_{K,s}&=[t^K x^{q-s}]M_{>}(t,x)\\
        &=[t^K x^{q-s}]\exp L_+(t,x).
 \end{aligned}
\end{equation}
Expanding the exponential as in~\eqref{eqexponentialpartitions} expresses this coefficient as a finite sum of products indexed by partitions of $q-s$.

If a counted path has $u$ upward and $d$ downward steps, then $u+d=K$ and $p+pu-qd=-s$. Solving gives the step counts
\[
 u_{K,s}=\frac{qK-p-s}{h},\qquad d_{K,s}=K-u_{K,s}.
\]
When $u_{K,s}$ is an integer between $0$ and $K$, every such path has probability $\theta^{u_{K,s}}(1-\theta)^{K-u_{K,s}}$. Therefore
\begin{equation}\label{eqfirstentryprob}
 \Pr_p\{\tau=K,\ Y_\tau=-s\}
 =N_{K,s}\theta^{u_{K,s}}(1-\theta)^{K-u_{K,s}}.
\end{equation}
If $u_{K,s}$ is not an integer in this range, the event has probability zero. Summing the disjoint terminal-height events gives
\[
 \Pr_p\{\tau=K\}
 =\sum_{s=0}^{q-1}\Pr_p\{\tau=K,\ Y_\tau=-s\}.
\]
For $s=0$ and $K=hn-1$, the coefficient in~\eqref{eqfirstentrycounts} is $T_n^{(p,q)}$ by~\eqref{eqendstrict}, so~\eqref{eqfirstentryprob} includes~\eqref{eqruin}.

\par\noindent\begin{minipage}{\linewidth}
For jumps $+4,-3$, starting at height $4$, the possible last steps are $3\to0$, $2\to-1$ and $1\to-2$. The corresponding earliest entry times are $6$, $4$ and $2$. There are respectively five, two and one paths at these times, giving
\begin{center}
\begin{tabular}{@{}ccc@{}}
\toprule
$K$ & $Y_\tau$ & $\Pr_4\{\tau=K,\ Y_\tau=-s\}$\\
\midrule
$2$ & $-2$ & $(1-\theta)^2$\\
$4$ & $-1$ & $2\theta(1-\theta)^3$\\
$6$ & $0$ & $5\theta^2(1-\theta)^4$\\
\bottomrule
\end{tabular}
\par\smallskip
{\small First-entry probabilities for jumps $+4,-3$ at the earliest possible time for each terminal height.\par}
\end{center}
\end{minipage}\par
For this application, we drew inspiration from studies of decision-making under additive wealth dynamics~\cite{VVMG} and from related work with a ruin boundary~\cite{VG}.

\section{Concluding remarks}\label{secconclusion}
For fixed jumps $+p$ and $-q$, the excursion counts are weighted sums of products of binomial expressions, equivalently generalized Fuss--Catalan coefficients. Both formulas sum over partitions of $q$, independently of the path size $n$. By exchanging the jump magnitudes, the smaller magnitude may be used. The indices in each product satisfy certain divisibility conditions. The same endpoint factorization gives the alternating weak expansion and the positive strict expansion. Multiplying each selected Fuss--Catalan factor by $q$ in the weak formula or $p$ in the strict formula gives a positive integer. The remaining weight $1/z_\lambda$ can still make an individual partition contribution fractional.

These formulas complement the Grossman--Bizley expansions indexed by partitions of the path size. Section~\ref{secexamples} and Appendix~\ref{appformulas} give explicit binomial identities for the corresponding excursion sequences. A direct combinatorial interpretation of the complete weighted partition contributions, together with refinements that exploit the positivity of the strict formula, remains a subject for further work.

\section{Acknowledgments}\label{secpreparation}
The initial intuition and research direction came from the authors. ChatGPT 6 contributed substantially to developing the theory and completing the proof, and assisted with the exposition, reference searches, and arithmetic verification. Afterwards, it helped to check for inconsistencies in the arguments, and it was used to make the LaTeX formatting easier. The final manuscript was written in the authors' own words, and the authors independently checked the proofs and take responsibility for the results.

\appendix
\section{Expanded binomial formulas}\label{appformulas}
\begingroup
\setlength{\jot}{1pt}
\setlength{\abovedisplayskip}{5pt plus 1pt minus 1pt}
\setlength{\belowdisplayskip}{5pt plus 1pt minus 1pt}
\setlength{\abovedisplayshortskip}{3pt plus 1pt}
\setlength{\belowdisplayshortskip}{3pt plus 1pt minus 1pt}
\allowdisplaybreaks[0]
Corollary~\ref{corshifted} gives the following explicit binomial formulas for all jump pairs $+6,-q$ with $1\le q<6$ and $+7,-q$ with $1\le q<7$. All restrictions on the summation indices are already incorporated into the displayed formulas. Every summation index is a nonnegative integer; if the displayed constraint has no solution, the sum is zero. Within each subsection, $E_n$ and $T_n$ denote the weak and strict excursion counts for the indicated jump pair. The formulas for $E_n$ hold for $n\ge0$, and those for $T_n$ hold for $n\ge1$, with $T_0=0$.

Size $n$ refers to the normalized jumps. In particular, $(6,2)$, $(6,3)$, and $(6,4)$ become $(3,1)$, $(2,1)$, and $(3,2)$, respectively.

\subsection{\texorpdfstring{Steps $+6$ and $-1$}{Steps +6 and -1}}

A path of size $n$ has length $7n$, with $n$ upward and $6n$ downward steps.

\nopagebreak[4]
\noindent\begin{minipage}[c]{0.495\linewidth}
\begin{align}\label{eqexpanded9}
E_n={}&\frac{\binom{7n+1}{n}}{7n+1}.
\end{align}
\end{minipage}\hfill
\begin{minipage}[c]{0.495\linewidth}
\begin{align}\label{eqexpanded10}
T_n={}&\frac{\binom{7n-1}{n}}{7n-1}.
\end{align}
\end{minipage}

\subsection{\texorpdfstring{Steps $+6$ and $-2$}{Steps +6 and -2}}

A path of size $n$ has length $4n$, with $n$ upward and $3n$ downward steps.

After normalization, the jumps are $+3$ and $-1$.

\noindent\begin{minipage}[c]{0.495\linewidth}
\begin{align}\label{eqexpanded11}
E_n={}&\frac{\binom{4n+1}{n}}{4n+1}.
\end{align}
\end{minipage}\hfill
\begin{minipage}[c]{0.495\linewidth}
\begin{align}\label{eqexpanded12}
T_n={}&\frac{\binom{4n-1}{n}}{4n-1}.
\end{align}
\end{minipage}

\subsection{\texorpdfstring{Steps $+6$ and $-3$}{Steps +6 and -3}}

A path of size $n$ has length $3n$, with $n$ upward and $2n$ downward steps.

After normalization, the jumps are $+2$ and $-1$.

\noindent\begin{minipage}[c]{0.495\linewidth}
\begin{align}\label{eqexpanded13}
E_n={}&\frac{\binom{3n+1}{n}}{3n+1}.
\end{align}
\end{minipage}\hfill
\begin{minipage}[c]{0.495\linewidth}
\begin{align}\label{eqexpanded14}
T_n={}&\frac{\binom{3n-1}{n}}{3n-1}.
\end{align}
\end{minipage}

\subsection{\texorpdfstring{Steps $+6$ and $-4$}{Steps +6 and -4}}

A path of size $n$ has length $5n$, with $2n$ upward and $3n$ downward steps.

After normalization, the jumps are $+3$ and $-2$.

\noindent\begin{minipage}[c]{0.495\linewidth}
\begin{align}\label{eqexpanded15}
E_n={}&\frac{\binom{5n+1}{2n}}{5n+1}-\frac{1}{2}\sum_{i+j=n-1}\frac{\binom{5i+3}{2i+1}}{5i+3}\frac{\binom{5j+3}{2j+1}}{5j+3}.
\end{align}
\end{minipage}\hfill
\begin{minipage}[c]{0.495\linewidth}
\begin{align}\label{eqexpanded16}
T_n={}&\frac{\binom{5n-1}{2n}}{5n-1}+\frac{1}{2}\sum_{i+j=n-1}\frac{\binom{5i+2}{2i+1}}{5i+2}\frac{\binom{5j+2}{2j+1}}{5j+2}.
\end{align}
\end{minipage}

\newpage
\subsection{\texorpdfstring{Steps $+6$ and $-5$}{Steps +6 and -5}}

A path of size $n$ has length $11n$, with $5n$ upward and $6n$ downward steps.

\begin{align}\label{eqexpanded17}
E_n={}&\frac{\binom{11n+1}{5n}}{11n+1}-\sum_{i+j=n-1}\frac{\binom{11i+3}{5i+1}}{11i+3}\frac{\binom{11j+9}{5j+4}}{11j+9}-\sum_{i+j=n-1}\frac{\binom{11i+5}{5i+2}}{11i+5}\frac{\binom{11j+7}{5j+3}}{11j+7}\notag\\
&+\frac{1}{2}\sum_{i+j+k=n-2}\frac{\binom{11i+5}{5i+2}}{11i+5}\frac{\binom{11j+9}{5j+4}}{11j+9}\frac{\binom{11k+9}{5k+4}}{11k+9}+\frac{1}{2}\sum_{i+j+k=n-2}\frac{\binom{11i+7}{5i+3}}{11i+7}\frac{\binom{11j+7}{5j+3}}{11j+7}\frac{\binom{11k+9}{5k+4}}{11k+9}\notag\\
&-\frac{1}{6}\sum_{i+j+k+\ell=n-3}\frac{\binom{11i+7}{5i+3}}{11i+7}\frac{\binom{11j+9}{5j+4}}{11j+9}\frac{\binom{11k+9}{5k+4}}{11k+9}\frac{\binom{11\ell+9}{5\ell+4}}{11\ell+9}\notag\\
&+\frac{1}{120}\sum_{i+j+k+\ell+m=n-4}\frac{\binom{11i+9}{5i+4}}{11i+9}\frac{\binom{11j+9}{5j+4}}{11j+9}\frac{\binom{11k+9}{5k+4}}{11k+9}\frac{\binom{11\ell+9}{5\ell+4}}{11\ell+9}\frac{\binom{11m+9}{5m+4}}{11m+9}.
\end{align}

\begin{align}\label{eqexpanded18}
T_n={}&\frac{\binom{11n-1}{5n}}{11n-1}+\sum_{i+j=n-1}\frac{\binom{11i+8}{5i+4}}{11i+8}\frac{\binom{11j+2}{5j+1}}{11j+2}+\sum_{i+j=n-1}\frac{\binom{11i+6}{5i+3}}{11i+6}\frac{\binom{11j+4}{5j+2}}{11j+4}\notag\\
&+\frac{1}{2}\sum_{i+j+k=n-1}\frac{\binom{11i+6}{5i+3}}{11i+6}\frac{\binom{11j+2}{5j+1}}{11j+2}\frac{\binom{11k+2}{5k+1}}{11k+2}+\frac{1}{2}\sum_{i+j+k=n-1}\frac{\binom{11i+4}{5i+2}}{11i+4}\frac{\binom{11j+4}{5j+2}}{11j+4}\frac{\binom{11k+2}{5k+1}}{11k+2}\notag\\
&+\frac{1}{6}\sum_{i+j+k+\ell=n-1}\frac{\binom{11i+4}{5i+2}}{11i+4}\frac{\binom{11j+2}{5j+1}}{11j+2}\frac{\binom{11k+2}{5k+1}}{11k+2}\frac{\binom{11\ell+2}{5\ell+1}}{11\ell+2}\notag\\
&+\frac{1}{120}\sum_{i+j+k+\ell+m=n-1}\frac{\binom{11i+2}{5i+1}}{11i+2}\frac{\binom{11j+2}{5j+1}}{11j+2}\frac{\binom{11k+2}{5k+1}}{11k+2}\frac{\binom{11\ell+2}{5\ell+1}}{11\ell+2}\frac{\binom{11m+2}{5m+1}}{11m+2}.
\end{align}

\subsection{\texorpdfstring{Steps $+7$ and $-1$}{Steps +7 and -1}}

A path of size $n$ has length $8n$, with $n$ upward and $7n$ downward steps.

\noindent\begin{minipage}[c]{0.495\linewidth}
\begin{align}\label{eqexpanded19}
E_n={}&\frac{\binom{8n+1}{n}}{8n+1}.
\end{align}
\end{minipage}\hfill
\begin{minipage}[c]{0.495\linewidth}
\begin{align}\label{eqexpanded20}
T_n={}&\frac{\binom{8n-1}{n}}{8n-1}.
\end{align}
\end{minipage}

\subsection{\texorpdfstring{Steps $+7$ and $-2$}{Steps +7 and -2}}

A path of size $n$ has length $9n$, with $2n$ upward and $7n$ downward steps.

\noindent\begin{minipage}[c]{0.495\linewidth}
\begin{align}\label{eqexpanded21}
E_n={}&\frac{\binom{9n+1}{2n}}{9n+1}-\frac{1}{2}\sum_{i+j=n-1}\frac{\binom{9i+5}{2i+1}}{9i+5}\frac{\binom{9j+5}{2j+1}}{9j+5}.
\end{align}
\end{minipage}\hfill
\begin{minipage}[c]{0.495\linewidth}
\begin{align}\label{eqexpanded22}
T_n={}&\frac{\binom{9n-1}{2n}}{9n-1}+\frac{1}{2}\sum_{i+j=n-1}\frac{\binom{9i+4}{2i+1}}{9i+4}\frac{\binom{9j+4}{2j+1}}{9j+4}.
\end{align}
\end{minipage}

\newpage
\subsection{\texorpdfstring{Steps $+7$ and $-3$}{Steps +7 and -3}}

A path of size $n$ has length $10n$, with $3n$ upward and $7n$ downward steps.

\begin{align}\label{eqexpanded23}
E_n={}&\frac{\binom{10n+1}{3n}}{10n+1}-\sum_{i+j=n-1}\frac{\binom{10i+4}{3i+1}}{10i+4}\frac{\binom{10j+7}{3j+2}}{10j+7}+\frac{1}{6}\sum_{i+j+k=n-2}\frac{\binom{10i+7}{3i+2}}{10i+7}\frac{\binom{10j+7}{3j+2}}{10j+7}\frac{\binom{10k+7}{3k+2}}{10k+7}.
\end{align}

\begin{align}\label{eqexpanded24}
T_n={}&\frac{\binom{10n-1}{3n}}{10n-1}+\sum_{i+j=n-1}\frac{\binom{10i+6}{3i+2}}{10i+6}\frac{\binom{10j+3}{3j+1}}{10j+3}+\frac{1}{6}\sum_{i+j+k=n-1}\frac{\binom{10i+3}{3i+1}}{10i+3}\frac{\binom{10j+3}{3j+1}}{10j+3}\frac{\binom{10k+3}{3k+1}}{10k+3}.
\end{align}

\subsection{\texorpdfstring{Steps $+7$ and $-4$}{Steps +7 and -4}}

A path of size $n$ has length $11n$, with $4n$ upward and $7n$ downward steps.

\begin{align}\label{eqexpanded25}
E_n={}&\frac{\binom{11n+1}{4n}}{11n+1}-\sum_{i+j=n-1}\frac{\binom{11i+9}{4i+3}}{11i+9}\frac{\binom{11j+3}{4j+1}}{11j+3}-\frac{1}{2}\sum_{i+j=n-1}\frac{\binom{11i+6}{4i+2}}{11i+6}\frac{\binom{11j+6}{4j+2}}{11j+6}\notag\\
&+\frac{1}{2}\sum_{i+j+k=n-1}\frac{\binom{11i+6}{4i+2}}{11i+6}\frac{\binom{11j+3}{4j+1}}{11j+3}\frac{\binom{11k+3}{4k+1}}{11k+3}\notag\\
&-\frac{1}{24}\sum_{i+j+k+\ell=n-1}\frac{\binom{11i+3}{4i+1}}{11i+3}\frac{\binom{11j+3}{4j+1}}{11j+3}\frac{\binom{11k+3}{4k+1}}{11k+3}\frac{\binom{11\ell+3}{4\ell+1}}{11\ell+3}.
\end{align}

\begin{align}\label{eqexpanded26}
T_n={}&\frac{\binom{11n-1}{4n}}{11n-1}+\sum_{i+j=n-1}\frac{\binom{11i+2}{4i+1}}{11i+2}\frac{\binom{11j+8}{4j+3}}{11j+8}+\frac{1}{2}\sum_{i+j=n-1}\frac{\binom{11i+5}{4i+2}}{11i+5}\frac{\binom{11j+5}{4j+2}}{11j+5}\notag\\
&+\frac{1}{2}\sum_{i+j+k=n-2}\frac{\binom{11i+5}{4i+2}}{11i+5}\frac{\binom{11j+8}{4j+3}}{11j+8}\frac{\binom{11k+8}{4k+3}}{11k+8}\notag\\
&+\frac{1}{24}\sum_{i+j+k+\ell=n-3}\frac{\binom{11i+8}{4i+3}}{11i+8}\frac{\binom{11j+8}{4j+3}}{11j+8}\frac{\binom{11k+8}{4k+3}}{11k+8}\frac{\binom{11\ell+8}{4\ell+3}}{11\ell+8}.
\end{align}

\newpage
\subsection{\texorpdfstring{Steps $+7$ and $-5$}{Steps +7 and -5}}

A path of size $n$ has length $12n$, with $5n$ upward and $7n$ downward steps.

\begin{align}\label{eqexpanded27}
E_n={}&\frac{\binom{12n+1}{5n}}{12n+1}-\sum_{i+j=n-1}\frac{\binom{12i+8}{5i+3}}{12i+8}\frac{\binom{12j+5}{5j+2}}{12j+5}-\sum_{i+j=n-1}\frac{\binom{12i+3}{5i+1}}{12i+3}\frac{\binom{12j+10}{5j+4}}{12j+10}\notag\\
&+\frac{1}{2}\sum_{i+j+k=n-1}\frac{\binom{12i+3}{5i+1}}{12i+3}\frac{\binom{12j+5}{5j+2}}{12j+5}\frac{\binom{12k+5}{5k+2}}{12k+5}+\frac{1}{2}\sum_{i+j+k=n-2}\frac{\binom{12i+10}{5i+4}}{12i+10}\frac{\binom{12j+10}{5j+4}}{12j+10}\frac{\binom{12k+5}{5k+2}}{12k+5}\notag\\
&-\frac{1}{6}\sum_{i+j+k+\ell=n-2}\frac{\binom{12i+10}{5i+4}}{12i+10}\frac{\binom{12j+5}{5j+2}}{12j+5}\frac{\binom{12k+5}{5k+2}}{12k+5}\frac{\binom{12\ell+5}{5\ell+2}}{12\ell+5}\notag\\
&+\frac{1}{120}\sum_{i+j+k+\ell+m=n-2}\frac{\binom{12i+5}{5i+2}}{12i+5}\frac{\binom{12j+5}{5j+2}}{12j+5}\frac{\binom{12k+5}{5k+2}}{12k+5}\frac{\binom{12\ell+5}{5\ell+2}}{12\ell+5}\frac{\binom{12m+5}{5m+2}}{12m+5}.
\end{align}

\begin{align}\label{eqexpanded28}
T_n={}&\frac{\binom{12n-1}{5n}}{12n-1}+\sum_{i+j=n-1}\frac{\binom{12i+4}{5i+2}}{12i+4}\frac{\binom{12j+7}{5j+3}}{12j+7}+\sum_{i+j=n-1}\frac{\binom{12i+9}{5i+4}}{12i+9}\frac{\binom{12j+2}{5j+1}}{12j+2}\notag\\
&+\frac{1}{2}\sum_{i+j+k=n-2}\frac{\binom{12i+9}{5i+4}}{12i+9}\frac{\binom{12j+7}{5j+3}}{12j+7}\frac{\binom{12k+7}{5k+3}}{12k+7}+\frac{1}{2}\sum_{i+j+k=n-1}\frac{\binom{12i+2}{5i+1}}{12i+2}\frac{\binom{12j+2}{5j+1}}{12j+2}\frac{\binom{12k+7}{5k+3}}{12k+7}\notag\\
&+\frac{1}{6}\sum_{i+j+k+\ell=n-2}\frac{\binom{12i+2}{5i+1}}{12i+2}\frac{\binom{12j+7}{5j+3}}{12j+7}\frac{\binom{12k+7}{5k+3}}{12k+7}\frac{\binom{12\ell+7}{5\ell+3}}{12\ell+7}\notag\\
&+\frac{1}{120}\sum_{i+j+k+\ell+m=n-3}\frac{\binom{12i+7}{5i+3}}{12i+7}\frac{\binom{12j+7}{5j+3}}{12j+7}\frac{\binom{12k+7}{5k+3}}{12k+7}\frac{\binom{12\ell+7}{5\ell+3}}{12\ell+7}\frac{\binom{12m+7}{5m+3}}{12m+7}.
\end{align}

\newpage
\subsection{\texorpdfstring{Steps $+7$ and $-6$}{Steps +7 and -6}}

A path of size $n$ has length $13n$, with $6n$ upward and $7n$ downward steps.

\begin{align}\label{eqexpanded29}
E_n={}&\frac{\binom{13n+1}{6n}}{13n+1}-\sum_{i+j=n-1}\frac{\binom{13i+3}{6i+1}}{13i+3}\frac{\binom{13j+11}{6j+5}}{13j+11}-\sum_{i+j=n-1}\frac{\binom{13i+5}{6i+2}}{13i+5}\frac{\binom{13j+9}{6j+4}}{13j+9}\notag\\
&-\frac{1}{2}\sum_{i+j=n-1}\frac{\binom{13i+7}{6i+3}}{13i+7}\frac{\binom{13j+7}{6j+3}}{13j+7}+\frac{1}{2}\sum_{i+j+k=n-2}\frac{\binom{13i+5}{6i+2}}{13i+5}\frac{\binom{13j+11}{6j+5}}{13j+11}\frac{\binom{13k+11}{6k+5}}{13k+11}\notag\\
&+\sum_{i+j+k=n-2}\frac{\binom{13i+7}{6i+3}}{13i+7}\frac{\binom{13j+9}{6j+4}}{13j+9}\frac{\binom{13k+11}{6k+5}}{13k+11}+\frac{1}{6}\sum_{i+j+k=n-2}\frac{\binom{13i+9}{6i+4}}{13i+9}\frac{\binom{13j+9}{6j+4}}{13j+9}\frac{\binom{13k+9}{6k+4}}{13k+9}\notag\\
&-\frac{1}{6}\sum_{i+j+k+\ell=n-3}\frac{\binom{13i+7}{6i+3}}{13i+7}\frac{\binom{13j+11}{6j+5}}{13j+11}\frac{\binom{13k+11}{6k+5}}{13k+11}\frac{\binom{13\ell+11}{6\ell+5}}{13\ell+11}\notag\\
&-\frac{1}{4}\sum_{i+j+k+\ell=n-3}\frac{\binom{13i+9}{6i+4}}{13i+9}\frac{\binom{13j+9}{6j+4}}{13j+9}\frac{\binom{13k+11}{6k+5}}{13k+11}\frac{\binom{13\ell+11}{6\ell+5}}{13\ell+11}\notag\\
&+\frac{1}{24}\sum_{i+j+k+\ell+m=n-4}\frac{\binom{13i+9}{6i+4}}{13i+9}\frac{\binom{13j+11}{6j+5}}{13j+11}\frac{\binom{13k+11}{6k+5}}{13k+11}\frac{\binom{13\ell+11}{6\ell+5}}{13\ell+11}\frac{\binom{13m+11}{6m+5}}{13m+11}\notag\\
&-\frac{1}{720}\sum_{i+j+k+\ell+m+s=n-5}\frac{\binom{13i+11}{6i+5}}{13i+11}\frac{\binom{13j+11}{6j+5}}{13j+11}\frac{\binom{13k+11}{6k+5}}{13k+11}\frac{\binom{13\ell+11}{6\ell+5}}{13\ell+11}\frac{\binom{13m+11}{6m+5}}{13m+11}\frac{\binom{13s+11}{6s+5}}{13s+11}.
\end{align}

\begin{align}\label{eqexpanded30}
T_n={}&\frac{\binom{13n-1}{6n}}{13n-1}+\sum_{i+j=n-1}\frac{\binom{13i+10}{6i+5}}{13i+10}\frac{\binom{13j+2}{6j+1}}{13j+2}+\sum_{i+j=n-1}\frac{\binom{13i+8}{6i+4}}{13i+8}\frac{\binom{13j+4}{6j+2}}{13j+4}\notag\\
&+\frac{1}{2}\sum_{i+j=n-1}\frac{\binom{13i+6}{6i+3}}{13i+6}\frac{\binom{13j+6}{6j+3}}{13j+6}+\frac{1}{2}\sum_{i+j+k=n-1}\frac{\binom{13i+8}{6i+4}}{13i+8}\frac{\binom{13j+2}{6j+1}}{13j+2}\frac{\binom{13k+2}{6k+1}}{13k+2}\notag\\
&+\sum_{i+j+k=n-1}\frac{\binom{13i+6}{6i+3}}{13i+6}\frac{\binom{13j+4}{6j+2}}{13j+4}\frac{\binom{13k+2}{6k+1}}{13k+2}+\frac{1}{6}\sum_{i+j+k=n-1}\frac{\binom{13i+4}{6i+2}}{13i+4}\frac{\binom{13j+4}{6j+2}}{13j+4}\frac{\binom{13k+4}{6k+2}}{13k+4}\notag\\
&+\frac{1}{6}\sum_{i+j+k+\ell=n-1}\frac{\binom{13i+6}{6i+3}}{13i+6}\frac{\binom{13j+2}{6j+1}}{13j+2}\frac{\binom{13k+2}{6k+1}}{13k+2}\frac{\binom{13\ell+2}{6\ell+1}}{13\ell+2}\notag\\
&+\frac{1}{4}\sum_{i+j+k+\ell=n-1}\frac{\binom{13i+4}{6i+2}}{13i+4}\frac{\binom{13j+4}{6j+2}}{13j+4}\frac{\binom{13k+2}{6k+1}}{13k+2}\frac{\binom{13\ell+2}{6\ell+1}}{13\ell+2}\notag\\
&+\frac{1}{24}\sum_{i+j+k+\ell+m=n-1}\frac{\binom{13i+4}{6i+2}}{13i+4}\frac{\binom{13j+2}{6j+1}}{13j+2}\frac{\binom{13k+2}{6k+1}}{13k+2}\frac{\binom{13\ell+2}{6\ell+1}}{13\ell+2}\frac{\binom{13m+2}{6m+1}}{13m+2}\notag\\
&+\frac{1}{720}\sum_{i+j+k+\ell+m+s=n-1}\frac{\binom{13i+2}{6i+1}}{13i+2}\frac{\binom{13j+2}{6j+1}}{13j+2}\frac{\binom{13k+2}{6k+1}}{13k+2}\frac{\binom{13\ell+2}{6\ell+1}}{13\ell+2}\frac{\binom{13m+2}{6m+1}}{13m+2}\frac{\binom{13s+2}{6s+1}}{13s+2}.
\end{align}

\endgroup

2020 \emph{Mathematics Subject Classification}: Primary 05A15; Secondary 05A10, 05A19, 60G50.

\emph{Keywords}: generalized Fuss--Catalan number, rational Dyck path, integer sequence, strict excursion, convolution identity, integer partition.

(Concerned with sequences
\seqnum{A001764},
\seqnum{A002293},
\seqnum{A002296},
\seqnum{A006013},
\seqnum{A006632},
\seqnum{A007556},
\seqnum{A060941},
\seqnum{A130565},
\seqnum{A234466},
\seqnum{A293946},
\seqnum{A300386},
\seqnum{A300387},
\seqnum{A300390}, and
\seqnum{A300391}.)

\end{document}